\newtheorem{theorem}{Theorem}[section]
\newtheorem{lemma}[theorem]{Lemma}
\newtheorem{corollary}[theorem]{Corollary}
\theoremstyle{definition}
\newtheorem{remark}[theorem]{Remark}
\newtheorem{example}[theorem]{Example}
\numberwithin{equation}{section}
\begin{document}

\title[Automorphisms and derivations]{Automorphisms and derivations of finite-dimensional  algebras}

\thanks{Supported by the Slovenian Research Agency (ARRS) Grant P1-0288. }

\author{Matej Bre\v sar} 
\address{Faculty of Mathematics and Physics,  University of Ljubljana,  and Faculty of Natural Sciences and Mathematics, University of Maribor, Slovenia}
\email{matej.bresar@fmf.uni-lj.si}

\keywords{Derivation, automorphism, antiautomorphism, Jordan automorphism, local derivation, local automorphism, finite-dimensional algebra, simple algebra, semisimple algebra, radical, functional identity}

\subjclass[2020]{16W20, 16W25, 16R60}

\begin{abstract} Let $A$ be a finite-dimensional algebra over a field $F$  with {\rm char}$(F)\ne 2$. We show that a linear map $D:A\to A$  satisfying  $xD(x)x\in [A,A]$ for every $x\in A$
is  the sum of an inner  derivation and a linear map whose  image lies in the radical of $A$. Assuming additionally that $A$ is semisimple and char$(F)\ne 3$, we show that 
 a linear map $T:A\to A$  satisfies  
$T(x)^3- x^3 \in [A,A]$   for every $x\in A$ if and only if 
 there exist a Jordan automorphism $J$ of $A$ lying in the multiplication algebra  of $A$ and a central element $\alpha$ 
  satisfying $\alpha^3=1$ such that  $T(x)=\alpha J(x)$ for all $x\in A$.
These two results are applied to the study of local derivations and local (Jordan) automorphisms. In particular, the second result is used to prove that every
local Jordan automorphism of a finite-dimensional simple algebra $A$ (over a field $F$ with char$(F)\ne 2,3$) is a Jordan automorphism.
\end{abstract}

\newcommand\E{\ell}
\newcommand\mathcalM{{\mathcal M}}
\newcommand\pc{\mathfrak{c}}

\newcommand{\enp}{\begin{flushright} $\Box$ \end{flushright}}

\maketitle

\section{Introduction} 

The theory of functional identities deals with the description of functions on rings and algebras  that satisfy certain identities  \cite{FIbook}. 
In this paper, we consider a more general type of  problems where expressions involving functions on an algebra $A$ are, instead of being always 0 as is usually the case with functional identities, contained in a relatively large subset of $A$, namely in $[A,A]$, the linear span of all commutators in $A$. In the special case where $A$ is the matrix algebra $M_n(F)$ this can be equivalently stated as that the trace of these expressions is always zero. Therefore, the relations that we will study are, in some sense, also more general than trace identities (see, e.g., \cite{AGPR}). We will not, however, develop some general theory, but consider only two special cases that merely indicate a possible new approach to ``generalized identities'' in rings and algebras. The two new type theorems  will be shown to have
 applications to a
 well-studied research topic, i.e., to the theory of local derivations and local automorphisms. 

Let us be more specific. All our results consider  a finite-dimensional algebra $A$ over a field $F$ with char$(F)\ne 2$. In Section \ref{s3}, we additionally assume that
char$(F)\ne 3$. 
Section \ref{s2} is centered around the condition that a linear map $D:A\to A$ satisfies
 \begin{equation} \label{2d} xD(x)x\in [A,A]\quad\mbox{for all $x\in A$},\end{equation}
and Section \ref{s3} is centered around the condition  that a linear map $T:A\to A$ satisfies
 \begin{equation} \label{3a} T(x)^3  - x^3\in [A,A]\quad\mbox{for all $x\in A$.}\end{equation}
It is immediate that inner derivations satisfy \eqref{2d} and inner automorphisms satisfy \eqref{3a}.

Our first main result, Theorem \ref{d}, states that \eqref{2d} implies that 
 $D$ is the sum of an inner derivation and a linear map having the image in rad$(A)$, the radical of $A$. Although maps with the image in rad$(A)$ do not always satisfy \eqref{2d}, it is still  reasonable that they appear in the conclusion since rad$(A)$ is sometimes contained in $[A,A]$.
 
  Condition \eqref{3a} can be studied similarly as condition   \eqref{2d}, but the results are more involved. 
 Theorem \ref{a}, which is our second main result,  states that if $A$ is semisimple then
condition  \eqref{3a} is equivalent to the condition that  there exist  a central element $\alpha$ satisfying $\alpha ^3=1$ and a Jordan automorphism  $J$ of $A$
belonging to the multiplication algebra of $A$ such that $T(x)=\alpha J(x)$ for all $x\in A$. In Corollaries \ref{ac2} and \ref{ac3}, we consider the situation where $A$ is a general, not necessarily semisimple finite-dimensional algebra. However, these results are not as definitive as Theorem \ref{d}.
 
The proofs of Theorems \ref{d} and \ref{a} use the classical theory of finite-dimensional algebras together with some results on Jordan maps. We also provide several examples that justify  the assumptions.

As already indicated, these two theorems  are applicable to the study of  local derivations and local automorphisms. 
 A {\em local derivation} of an algebra $A$ is a linear map $D:A\to A$ with the property that for each $x\in A$, there is a derivation $D_x:A\to A$ such that
$D(x)=D_x(x)$. This notion was introduced in 1990 by Kadison
\cite{K} and independently by Larson and Sourour \cite{LS} who also introduced {\em local automorphisms}.  These are defined analogously, i.e., as linear maps $T:A\to A$ such that for each $x\in A$, there is an automorphism $T_x:A\to A$ satisfying
$T(x)=T_x(x)$. The definitions of other types of local maps should now be self-explanatory. The standard question is whether local derivations, local automorphisms, etc.\ are derivations, automorphisms, etc.\ 
Over the last three decades,  positive answers were obtained in various algebras $A$ (occurring not only in algebra but also if not primarily in functional analysis).  We refer   to a few recent publications \cite{zpdbook,Co,GM} which contain some
historical remarks and further references.

Theorem \ref{d} immediately implies that every local inner derivation of  a finite-dimensional semisimple algebra $A$  (over a field $F$ with char$(F)\ne 2$)  is an inner derivation (Corollary \ref{cd2}). In Example \ref{ede} we show that a similar conclusion for general derivations does not always hold. The automorphism case is more complex and interesting.
As will be explained in Section \ref{s3},  local Jordan automorphisms are more natural in finite dimensions than local automorphisms. Using Theorem \ref{a}, we will show that every local Jordan automorphism of a
 finite-dimensional simple algebra $A$ over a field $F$ with char$(F)\ne 2,3$
 is a Jordan automorphism (Theorem \ref{a2}). This is the third main result of this paper.

To the best of our knowledge, our results on local maps are new and cover a basic class of algebras which is quite different from those treated by other authors.
Papers on local maps  are often based 
on the existence of some special elements like idempotents. The class of  simple algebras, however, includes division algebras which
  contain no such elements.

\section{Derivations}\label{s2}


 We start with a simple but important lemma. As usual, we write $[x,y]$ for the commutator $xy-yx$, and $[A,A]$ for the linear span of all $[x,y]$, $x,y\in A$.

\begin{lemma}\label{l1}
Let $A$ be a finite-dimensional simple algebra. If $c\in A$ is such that $cA\subseteq [A,A]$, then $c=0$. 
\end{lemma}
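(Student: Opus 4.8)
The plan is to exploit the reduced trace of $A$. Since $A$ is a finite-dimensional simple algebra, its center $K=Z(A)$ is a (finite) field extension of $F$, and $A$ is a central simple $K$-algebra; let $t=\mathrm{trd}_{A/K}\colon A\to K$ be its reduced trace. Three facts about $t$ will be used: (i) $t$ is a nonzero $K$-linear (hence $F$-linear) map; (ii) $t(xy)=t(yx)$ for all $x,y\in A$, so that $t$ vanishes on $[A,A]$; and (iii) the $K$-bilinear form $b(x,y)=t(xy)$ is nondegenerate. All three are standard, and all three reduce to the split case: after extending scalars to a splitting field $E$ one has $A\otimes_K E\cong M_n(E)$ and $t$ becomes the ordinary matrix trace, for which (i)--(iii) are elementary (and, incidentally, $[M_n(E),M_n(E)]$ is exactly the space of trace-zero matrices, although this last equality will not be needed).

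Granting this, the lemma is immediate. Suppose $c\in A$ satisfies $cA\subseteq[A,A]$. By (ii), $t(cy)=0$ for every $y\in A$, i.e.\ $b(c,y)=0$ for all $y$, and then (iii) forces $c=0$. The baby case $A=M_n(F)$ is already instructive: there $[A,A]$ is the space of trace-zero matrices, and $cM_n(F)\subseteq[A,A]$ gives $\tr(cE_{ij})=0$ for all $i,j$, which says precisely that every entry of $c$ is zero.

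If one prefers a more self-contained write-up, the nondegeneracy (iii) can be deduced directly from simplicity, avoiding any appeal to scalar extension beyond what is needed for (i) and (ii). Indeed, the left radical $R=\{x\in A:\ t(xA)=0\}$ of the form $b$ is, by (ii), a two-sided ideal of $A$ (for $x\in R$ and $a,y\in A$ one computes $t((ax)y)=t(x(ya))=0$ and $t((xa)y)=t(x(ay))=0$), and $R\ne A$ because $t\ne 0$; hence $R=0$ by simplicity. Since $cA\subseteq[A,A]$ puts $c$ into $R$, we again get $c=0$.

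The only genuine subtlety is the choice of trace. It is tempting to use the more elementary trace $\tau(x)=\tr(L_x)$ of the left regular representation $L\colon A\to\mathrm{End}_F(A)$, which also kills $[A,A]$; but its associated form can be \emph{degenerate} in positive characteristic --- for instance $\tau\equiv 0$ on $M_p(\mathbb F_p)$, since $L_x$ consists of $p$ copies of $x$ and so $\tau=p\cdot\tr$. The reduced trace is exactly the correctly normalized substitute that stays nonzero and nondegenerate in every characteristic, which is why no hypothesis on $\mathrm{char}(F)$ is needed here.
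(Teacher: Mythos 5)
Your proof is correct, but it takes a different route from the paper's. The paper argues via ideals: from $xcy=[x,cy]+cyx\in[A,A]$ it deduces that the two-sided ideal generated by $c$ lies in $[A,A]$, and then only needs the (weaker) fact that $[A,A]$ is a \emph{proper} subspace of $A$ to conclude $c=0$ from simplicity. You instead invoke the reduced trace $\mathrm{trd}_{A/K}$ and the nondegeneracy of its associated bilinear form; all the facts you use are standard and your deduction is sound, and your ``self-contained'' variant (showing the radical of the trace form is a two-sided ideal and killing it by simplicity) is in fact quite close in spirit to the paper's ideal trick. The trade-off: the paper's argument needs less input --- just properness of $[A,A]$, cited as an exercise --- while yours imports the nondegeneracy of the reduced trace form, a heavier but equally standard tool that buys a sharper picture ($[A,A]$ is exactly the kernel of $\mathrm{trd}$, i.e.\ has codimension one over the center, which is the parenthetical refinement the paper mentions). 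Your closing caution that the regular-representation trace can degenerate in positive characteristic (e.g.\ on $M_p(\mathbb F_p)$) is a genuinely worthwhile remark, since it explains why the \emph{reduced} trace is the right choice and why no hypothesis on $\mathrm{char}(F)$ is needed in this lemma.
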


\begin{proof} From $xcy = [x,cy] + cyx \in [A,A]$ we see that  the  ideal of $A$ generated by $c$ is contained in $[A,A]$. However, it
 is easy to see that $[A,A]$ is a proper subspace of $A$ (in fact, it has codimension $1$ if viewed as a space over the center $Z$ of $A$ \cite[Exercise 4.12]{INCA}). Hence, $c=0$.
\end{proof}

\begin{remark}\label{l1a}
 We will also need the following  technical variation of Lemma \ref{l1}: If char$(F)\ne 2$ and $A$ is as in the lemma, then $cx^2 \in [A,A]$ for every $x\in A$ implies $c=0$. This follows immediately from $x=\frac{1}{2}((x+1) ^2- x^2-1^2)$ (here we used that a finite-dimensional simple algebra always contains $1$). Note that  we may replace the condition $cx^2 \in [A,A]$ by $xcx\in [A,A]$ since $xcx = cx^2 + [x,cx]$.
\end{remark}

Let $D$ be a  linear map from an algebra $A$ to itself. 
 A linear map $\Delta:A\to A$ is called a  {\em Jordan $(D,D)$-derivation} if it satisfies $$\Delta(x^2) = D(x)x+xD(x)\quad\mbox{for all $x\in A$}.$$
  This notion was introduced in \cite{Bj} (as a special case of  more general Jordan $(D,G)$-derivations)  in order to study the classical Jordan derivations  (the case where $\Delta = D$) on tensor products.  Somewhat to the author's surprise, Jordan $(D,D)$-derivations
 naturally occur in the proof of the next theorem, and the result from \cite{Bj} stating that they satisfy $\Delta(xy)=D(x)y+xD(y)$ provided that $A$ is a  semiprime algebra (over a field of characteristic not $2$) is applicable. 
 
 Let us also recall a few standard definitions and facts.  The {\em radical} of a finite-dimensional algebra $A$, denoted rad$(A)$, is the unique maximal nilpotent ideal of $A$. An equivalent description is that rad$(A)$ is the intersection of all maximal ideals of $A$.
 If rad$(A) =\{0\}$, then $A$ is a semisimple algebra, i.e., $A$ is a direct sum of ideals each of which is a simple algebra. The quotient algebra $A/{\rm rad}(A)$ is always semisimple.
 Finally, the 
  algebra of all linear maps from $A$ to $A$ of the form $x\mapsto \sum_i a_ixb_i$ for some $a_i,b_i\in A$ is called the 
 {\em multiplication algebra} of $A$. It will be denoted by $M(A)$.
 
  We can now state our first main theorem.

\begin{theorem}\label{d}
Let $A$ be a finite-dimensional algebra over a field $F$ with {\rm char}$(F)\ne 2$. If a linear map $D:A\to A$ satisfies 
 $xD(x)x\in [A,A]$ for every $x\in A$, then $D$ is the sum of an inner  derivation of $A$ and a linear map from $A$ to {\rm rad}$(A)$.
\end{theorem}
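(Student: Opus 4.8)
The plan is to first reduce to the semisimple case and then, inside a semisimple algebra, extract a Jordan $(D,D)$-derivation, apply the cited result from \cite{Bj}, and finally use the structure theory of semisimple algebras together with Lemma \ref{l1} (in the variation of Remark \ref{l1a}) to pin $D$ down to an inner derivation. So I would begin by considering $\bar A = A/{\rm rad}(A)$ with quotient map $\pi$, and the induced map $\bar D:\bar A\to\bar A$. Since ${\rm rad}(A)$ and ${\rm rad}(A)\cdot A + A\cdot{\rm rad}(A)$ interact well with $[A,A]$ modulo the radical — indeed the image of $[A,A]$ in $\bar A$ is exactly $[\bar A,\bar A]$ — the hypothesis $xD(x)x\in[A,A]$ passes to $\bar x\bar D(\bar x)\bar x\in[\bar A,\bar A]$ for all $\bar x\in\bar A$. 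If I can prove the theorem for semisimple $A$, I get that $\bar D$ is an inner derivation of $\bar A$, say $\bar D = {\rm ad}_{\bar a}$; lifting $\bar a$ to $a\in A$, the map $D - {\rm ad}_a$ then has image contained in ${\rm rad}(A)$, which is the desired conclusion. (One small point to check here: that every inner derivation of the semisimple quotient lifts to an inner derivation of $A$, which is immediate by lifting the implementing element.)

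So assume now $A$ is semisimple, hence $A = A_1\oplus\cdots\oplus A_k$ with each $A_i$ simple. The key linearization step: replace $x$ by $x+y$ in $xD(x)x\in[A,A]$ and collect the terms that are of degree $1$ in $y$; because of $\mathrm{char}(F)\ne 2$ this shows
$$yD(x)x + xD(y)x + xD(x)y \in [A,A]\quad\text{for all }x,y\in A.$$
Now specialize $y=1$ to obtain $D(x)x + D(1)x + xD(x)\in[A,A]$, i.e., setting $D_0(x) := D(x) + \tfrac12 D(1)x + \tfrac12 x D(1)$ — or more simply replacing $D$ by $D - \tfrac12({\rm ad\ stuff})$ — I can arrange that $D(x)x + xD(x)\in[A,A]$ for all $x$. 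Equivalently, after this normalization, the symmetric product $D(x)x+xD(x)$ is a commutator; writing $x^2 = \tfrac12((x+1)^2 - x^2 - 1)$-type manipulations (or simply linearizing $D(x^2)$-free), define $\Delta:A\to A$ by declaring $\Delta(x^2) := D(x)x + xD(x)$ — the point being that this is well-defined and linear because $x\mapsto x^2$ spans $A$ as $\mathrm{char}(F)\ne 2$ and $A$ has a unit, and I should instead phrase $\Delta$ via the polarization $\Delta(xy+yx) = D(x)y + D(y)x + xD(y)+yD(x)$... hmm — more carefully, I want a genuine linear map $\Delta$ with $\Delta(x^2)=D(x)x+xD(x)$, which exists by the standard argument. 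Then $\Delta(x^2) = D(x)x+xD(x) \in [A,A]$, so $\pi\circ\Delta$ (still into $A$, now landing in $[A,A]$) — actually I want to exploit that $\Delta$ is a Jordan $(D,D)$-derivation, invoke \cite{Bj} to get $\Delta(xy) = D(x)y + xD(y)$ for all $x,y$ (valid since semisimple $\Rightarrow$ semiprime), and combine this with $\Delta(x^2)\in[A,A]$.

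From $\Delta(xy) = D(x)y + xD(y)$ with $\Delta$ linear and $\Delta(x^2)\in[A,A]$, I polarize to get $D(x)y + xD(y) + D(y)x + yD(x)\in[A,A]$ for all $x,y$. The endgame is to show this forces $D$ to be inner on each simple block. On a simple block $A_i$ with unit $e_i$, put $y=e_i$: then $D(x)e_i + xD(e_i) + D(e_i)x + e_iD(x)\in[A,A]\cap A_i = [A_i,A_i]$; since $D(e_i)$ and the unit behave controllably I can reduce to $c\,x + x\,c' \in [A_i,A_i]$-type relations and then to $c\,x^2\in[A_i,A_i]$, at which point Remark \ref{l1a} gives the relevant elements are $0$. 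Running this carefully should identify $D\big|_{A_i}$ with ${\rm ad}_{a_i}$ for a suitable $a_i\in A_i$; summing over $i$ gives $D = {\rm ad}_a$ with $a = \sum a_i$. The main obstacle I anticipate is the bookkeeping in this last stage — correctly organizing the unital reductions and repeated applications of Lemma \ref{l1}/Remark \ref{l1a} to separate the "inner derivation" part from the part one is allowed to push into the radical — together with verifying that the normalizations performed at the linearization step (absorbing $D(1)$) do not themselves leave the class of maps for which the argument runs, i.e., that they only modify $D$ by an inner derivation plus a map into ${\rm rad}(A)$.
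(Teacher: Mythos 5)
Your overall architecture (reduce to the semisimple case, extract a Jordan $(D,D)$-derivation, invoke \cite{Bj}) matches the paper's, but there are several genuine gaps, two of which are essential. First, the reduction: the induced map $\overline{D}$ on $A/{\rm rad}(A)$ is not automatically defined, since $D$ is only a linear map; you must first prove $D({\rm rad}(A))\subseteq {\rm rad}(A)$. This is a real consequence of the hypothesis (the paper derives it by taking $y$ in a maximal ideal $M$ in the linearized relation and applying Remark \ref{l1a} in $A/M$), not a formality. Second, and more seriously, the existence of a linear $\Delta$ with $\Delta(x^2)=D(x)x+xD(x)$ is not a ``standard argument.'' The squares span $A$ but are not independent, so prescribing $\Delta$ on them requires a consistency check that fails for a general $D$. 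The paper gets $\Delta$ by an entirely concrete route: it first proves $D$ is $Z$-linear (substituting $zy$ for $y$ and using Remark \ref{l1a}), concludes $D\in M(A)$, writes $D(x)=\sum_i a_ixb_i$, and shows via Lemma \ref{l1} that $\Delta(x)=-\sum_i b_ixa_i$ works. You skip the $Z$-linearity step entirely, and without it the argument collapses at two points: you cannot produce $\Delta$, and you cannot conclude that the derivation you eventually obtain is inner --- a finite-dimensional simple algebra that is not central over $F$ (e.g.\ the inseparable field extension of Example \ref{ede}) has non-inner $F$-linear derivations, so ``derivation of a simple block $\Rightarrow$ inner'' is false without $Z$-linearity.

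Third, your normalization absorbing $D(1)$ does not stay within the allowed class of corrections. After the dust settles one gets $D(x)=\beta x+[a,x]$ with $\beta=D(1)$ central, and $x\mapsto\beta x$ is neither an inner derivation nor a map into the radical (which is zero in the semisimple case). Showing $\beta=0$ is the hardest part of the paper's proof: one must rule out $x^3\in[A,A]$ for all $x$, which is done by linearizing, passing to the scalar extension $A_K\cong M_n(K)$, and exhibiting an idempotent whose cube has nonzero trace. Your proposal flags the normalization as ``an obstacle to verify'' but offers no argument, and without one the conclusion you reach is strictly weaker than the theorem.
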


\begin{proof}We write $x\equiv y$ for $x-y\in [A,A]$. 
Our assumption is  thus $xD(x)x\equiv 0$ for all $x\in A$.
As {\rm char}$(F)\ne 2$, replacing 
$x$ by $x\pm y$   implies
 \begin{equation}\label{d1} yD(x)x + xD(y)x + xD(x)y \equiv 0\quad\mbox{for all $x,y \in A$.}
\end{equation}
This will be our basic relation in the course of the proof.

Let $M$ be a maximal ideal of $A$. Taking $y\in M$ it follows from \eqref{d1} that $xD(y)x\in M+ [A,A]$ for every $x\in A$. Hence, $c=D(y)+M\in A/M$ satisfies $ucu\in [A/M,A/M]$ for every $u\in A/M$. 
As $A/M$ is simple, Remark \ref{l1a} tells us that
$c =0$, i.e., $D(y)\in M$. We have thus proved that $D(M)\subseteq M$ for every maximal ideal $M$ of $A$.

As rad$(A)$ is the intersection of all maximal ideals of $A$, it follows that $D({\rm rad}(A))\subseteq {\rm rad}(A)$. We can thus define $\overline{D}:A/{\rm rad}(A)\to A/{\rm rad}(A)$ by
$$\overline{D}(x+ {\rm rad}(A)) = D(x)+ {\rm rad}(A).$$
Note that $\overline{D}$ is a linear map satisfying $$v\overline{D}(v)v\in [A/{\rm rad}(A),A/{\rm rad}(A)]\quad\mbox{for all $v\in A/{\rm rad}(A)$}.$$
Assuming that the theorem is true for semisimple algebras, it follows from this relation that $\overline{D}$ is an inner derivation of $A/{\rm rad}(A)$, which further implies that $D$ is of the desired form. Therefore, we may assume without loss of generality that $A$ is a semisimple algebra. 

Thus, $A=A_1\oplus \dots\oplus A_r$ where each $A_i$ is a simple algebra. Since $A_i$ is the intersection of the maximal ideals  $A_1\oplus\dots\oplus A_{j-1}\oplus A_{j+1} \dots\oplus A_r$ with $j\ne i$,  it is invariant under 
$D$ by what we proved above. Considering the restriction of $D$ to each $A_i$, we see that there is no loss of generality in assuming that $A$ is a simple algebra.

Let $Z$ denote the center of $A$.  Take $z\in Z$. Substituting $z y$ for $y$ in \eqref{d1} we obtain
$$z yD(x)x + xD(z  y)x + z  xD(x)y\equiv 0\quad\mbox{for all $x,y \in A$.}$$
On the other hand, since $z[A,A] \subseteq [A,A]$,
  \eqref{d1} shows that 
 $$ z yD(x)x + z xD( y)x + z  xD(x)y  \equiv 0\quad\mbox{for all $x,y \in A$.}$$
 Comparing both relations we obtain
$$x\big(D(z y) - z D(y)\big)x \equiv 0 \quad\mbox{
 for all $x,y \in A$.}$$
  Since $A$ is simple, we see from Remark \ref{l1a}  that $D(z y) = z D(y)$, i.e., $D$ is $Z$-linear. But then $ D$ belongs to the multiplication algebra $M(A)$ \cite[Lemma 1.25]{INCA}. 
  
  Let $a_i,b_i\in A$ be such that $$D(x)=\sum_i a_ixb_i\quad\mbox{for all $x\in A$}.$$  We  have
  $$xD(y)x = \sum_i x a_iyb_ix  = \sum_i b_ix ^2  a_iy +  \sum_i [xa_iy,b_ix ]$$
  and so   $xD(y)x  \equiv \sum_i b_ix ^2  a_iy$.
 Using also
 $yD(x)x \equiv D(x)xy $
 we now see that \eqref{d1} can be rewritten as 
 $$\big(D(x) x +  \sum_i b_ix ^2  a_i + xD(x)\big) y\equiv 0\quad\mbox{for all $x,y \in A$.}$$
Therefore, by  Lemma \ref{l1},  $$D(x) x +  \sum_i b_ix ^2  a_i + xD(x) =0\quad\mbox{for all $x \in A$.}$$
  This means that the linear map $\Delta:A\to A$ defined by
  $$\Delta(x)=-\sum_i b_i x a_i$$
is a Jordan $(D,D)$-derivation. By \cite[Theorem 4.3]{Bj}, $\Delta$  satisfies 
    \begin{equation}\label{d2}
\Delta(xy) = D(x)y+xD(y)\quad\mbox{for all $x,y \in A$.}\end{equation}
    Writing first $1$ for $x$ and then $1$ for $y$ we see that $\beta = D(1)$ lies in  $Z$. From
    $\Delta(x)= D(x) + \beta x$ and \eqref{d2} it follows that $d:A\to A$ defined by $$d(x) = D(x) - \beta x$$
    is a derivation. Moreover, $d$ is $Z$-linear. It is a standard fact  that such a derivation is inner (see, e.g., \cite[Exercise 4.20]{INCA}).  Thus, there is an $a\in A$ such that  $D(x) = \beta x + [a,x]$ for all $x\in A$.

    The proof will be complete by showing that $\beta =0$.
     Since 
    \begin{equation}\label{nin}x[a,x]x= [xax,x]\equiv 0\end{equation}  it follows from $xD(x)x\equiv 0$ that $\beta x^3 \equiv 0$. If $\beta$ was not $0$, every
    $x\in A$ would satisfy  \begin{equation}\label{d'}
    x^3 \equiv 0.\end{equation}  To show that this  is not  true, first note that a complete linearization of \eqref{d'} gives 
\begin{equation} x_1x_2x_3 + x_1x_3x_2 + x_2x_1x_3 + x_2x_3x_1+ x_3x_1x_2 + x_3x_2x_1\equiv 0 \label{d3}\end{equation}
for all $x_1,x_2,x_3 \in A$.
Since char$(F)\ne 2$, we also have
 \begin{equation} \label{d4}x_1^2 x_2 + x_1x_2 x_1 + x_2x_1^2 \equiv 0 \end{equation}
 for all $x_1,x_2 \in A$.
    Let $K$ be the algebraic closure of $Z$ and let
 $A_K=K\otimes A$ be the scalar extension of $A$ to $K$.   Take $y=\sum_j k_j\otimes x_j \in A_K$. Observe that $y^3$ is a sum of terms of the form
 $$k\otimes x _i^3,$$ $$k' \otimes (x_i^2 x_j + x_ix_j x_i + x_jx_i^2),$$
 and $$k''\otimes  ( x_ix_jx_k + x_ix_kx_j + x_jx_ix_k + x_jx_kx_i+ x_kx_ix_j + x_kx_jx_i).$$
 Using \eqref{d'}, \eqref{d3}, and \eqref{d4} it follows that $y ^3 \in [A_K,A_K]$.  However, $A_K\cong M_n(K)$ and so this obviously cannot hold for every $y\in A_K$ (e.g., for an idempotent of rank $1$). This contradiction  proves that  $\beta =0$.
\end{proof}

\begin{remark}\label{remder}
The last paragraph of the proof could be  shortened if char$(F)$ was different from $3$. Indeed, since $x_1^2 x_2 \equiv x_1x_2 x_1 \equiv x_2x_1^2 $, in this case it is enough to  apply Lemma \ref{l1} to \eqref{d4}. 
If, however,  char$(F)= 3$, then    \eqref{d4} holds for any algebra over $F$ as  $x_1^2 x_2 + x_1x_2 x_1 + x_2x_1^2$ is then equal to $[x_1,[x_1,x_2]]$. \end{remark}

As noticed in \eqref{nin}, every inner derivation $D$ satisfies  $xD(x)x\in [A,A]$.  
Thus, if the algebra $A$ is such that rad$(A)\subseteq [A,A]$, then Theorem \ref{d} turns into an ``if and only if'' theorem. A simple concrete example is the algebra $A$ of all upper triangular matrices over $F$ (which  actually satisfies  rad$(A)= [A,A]$). Another example is of course any semisimple algebra. We record this as a corollary.

\begin{corollary}\label{dd}
Let $A$ be a finite-dimensional semisimple algebra over a field $F$ with {\rm char}$(F)\ne 2$. The following conditions are equivalent for a linear map $D:A\to A$:
\begin{enumerate}
\item[{\rm (i)}]  $xD(x)x\in [A,A]$ for every $x\in A$.
\item[{\rm (ii)}]
 $D$ is an inner derivation.
 \end{enumerate}
\end{corollary}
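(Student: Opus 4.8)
The plan is to deduce this corollary directly from Theorem \ref{d}, so that essentially all the work has already been done.

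For the implication (ii) $\Rightarrow$ (i): if $D$ is the inner derivation $x\mapsto [a,x]$, then, exactly as recorded in \eqref{nin},
\[ xD(x)x = x[a,x]x = [xax,x]\in [A,A], \]
so (i) holds. (An inner derivation is by definition of this single-element form, so this one identity suffices.)

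For the implication (i) $\Rightarrow$ (ii): I would simply invoke Theorem \ref{d}, which says that $D = d + \rho$, where $d$ is an inner derivation of $A$ and $\rho\colon A\to {\rm rad}(A)$ is linear. Since $A$ is semisimple, ${\rm rad}(A)=\{0\}$, hence $\rho = 0$ and $D = d$ is an inner derivation. The only ``obstacle'' here is Theorem \ref{d} itself; the role of the corollary is merely to record the resulting equivalence in the semisimple case, which is precisely the instance of the remark preceding it where ${\rm rad}(A)\subseteq [A,A]$ holds trivially because ${\rm rad}(A)=\{0\}$.
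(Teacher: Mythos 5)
Your proposal is correct and is exactly the argument the paper intends: the paragraph preceding the corollary derives it from Theorem \ref{d} together with the observation \eqref{nin} that inner derivations satisfy $x[a,x]x=[xax,x]\in[A,A]$, and in the semisimple case ${\rm rad}(A)=\{0\}$ kills the second summand. Nothing is missing.
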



Inner derivations $D$  also satisfy a simpler condition $xD(x)\in [A,A]$ for all $x\in A$.  However, so do many other maps, as the next example shows. There are thus good reasons for  considering the condition  $xD(x)x \in [A,A]$.

\begin{example}\label{rd} 
Every map of the form $D(x)=\sum_i a_ixb_i-b_ixa_i$  satisfies $xD(x)\in [A,A]$ for all $x\in A$. This follows from  $x(a_ixb_i-b_ixa_i)=[xa_i,xb_i]$.
\end{example}

The following example shows that the assumption that char$(F)\ne 2$ is necessary in Corollary \ref{dd}.

\begin{example}
Let 
 $F=\mathbb F_2$ be the field  with 2 elements and let $A=M_2(F)$. Define $D:A\to A$ by
 $$D\left( \left[ \begin{matrix} x_{11} & x_{12} \cr x_{21} & x_{22} \cr \end{matrix} \right]\right) = \left[ \begin{matrix}x_{22} & x_{12} \cr 0 & x_{11} \cr \end{matrix}\right].$$
 Using $xy(x+y)=0$ for all $x,y\in F$ one can check that the trace of the matrix $xD(x)x$ is $0$ for every $x\in A$. Therefore,  $xD(x)x$ lies in  $[A,A]$. However, $D$ is not a derivation.
\end{example}

In the rest of this section we consider local (inner) derivations. 




\begin{corollary}\label{cd2}
Let $A$ be a finite-dimensional semisimple algebra over a field $F$ with {\rm char}$(F)\ne 2$. Then every local inner derivation
$D:A\to A$ is an inner  derivation.
\end{corollary}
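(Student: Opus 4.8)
The plan is to reduce this immediately to Corollary \ref{dd}. By definition, a local inner derivation $D:A\to A$ has the property that for every $x\in A$ there is an element $a_x\in A$ (depending on $x$) such that $D(x)=[a_x,x]$. The point is that, although $D$ itself need not be an inner derivation a priori, the value $D(x)$ at each fixed $x$ agrees with the value at $x$ of \emph{some} inner derivation, and the quantity we care about, $xD(x)x$, depends only on this value.

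Concretely, I would argue as follows. Fix $x\in A$ and pick $a_x$ with $D(x)=[a_x,x]$. Then, exactly as in the computation \eqref{nin} in the proof of Theorem \ref{d},
\[
xD(x)x = x[a_x,x]x = [\,x a_x x,\, x\,]\in [A,A].
\]
Since this holds for every $x\in A$, the linear map $D$ satisfies condition (i) of Corollary \ref{dd}. As $A$ is a finite-dimensional semisimple algebra over a field of characteristic not $2$, Corollary \ref{dd} applies and yields that $D$ is an inner derivation, which is precisely the assertion.

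I do not expect any real obstacle here: the content is entirely carried by Theorem \ref{d} (via Corollary \ref{dd}), and the only thing to check is the elementary identity $x[a,x]x=[xax,x]$, which was already recorded. The one conceptual remark worth making explicit in the writeup is why the pointwise hypothesis suffices — namely that for each individual $x$ the expression $xD(x)x$ coincides with $x D_x(x) x$ for the inner derivation $D_x$ witnessing locality at $x$, so the membership $xD(x)x\in[A,A]$ is inherited from the (globally valid) fact that inner derivations satisfy \eqref{2d}. After that, Corollary \ref{dd} does all the work.
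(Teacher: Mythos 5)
Your proof is correct and is essentially identical to the paper's own argument: both verify the identity $x[a_x,x]x=[xa_xx,x]\in[A,A]$ pointwise and then invoke Corollary \ref{dd}. Nothing to add.
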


\begin{proof}
The condition that $D$ is a local inner derivations means that for every $x\in A$, there exists an $a_x\in A$ such that $D(x)=[a_x,x]$. This obviously implies
$xD(x)x = [xa_x x,x]\in [A,A]$. Therefore, $D$ is an inner derivation by Corollary \ref{dd}.
\end{proof}

The next example shows that  Corollary \ref{cd2} cannot be extended to general, not necessarily inner derivations. In fact, this fails to hold even when $A$ is a field.  Of course, this can occur only if $A$ is an  inseparable field extension of $F$ in order to have nontrivial derivations. We remark that our  example is similar to the one from \cite{K}  which concerns the algebra $\mathbb C(X)$ (which, however, is infinite-dimensional over $\mathbb C$).


\begin{example}\label{ede}
Let $p$ be an odd prime and let $F=\mathbb F_p(t)$ be the  rational function field over the field with $p$ elements $\mathbb F_p$. If  $\alpha$ is a root of the (irreducible and inseparable) polynomial $X ^p - t$, then  $A=F(\alpha)$ has degree  $p$ over $F$.
For any $f= \sum_{k=0}^{p-1} a_k \alpha ^k \in A$, $a_k\in F$, define $f'=  \sum_{k=0}^{p-1} k a_k \alpha ^{k-1}$. Observe that $f\mapsto f'$ is an $F$-linear derivation of $A$ whose kernel is $F$.  For
any $g \in A$, $f\mapsto gf'$ 
is also an $F$-linear derivation of $A$. This readily implies
that every $F$-linear map of $A$ that sends $1$ to $0$ is a local derivation. However,
such a map is not necessarily a derivation. For example, if it sends $\alpha$ to $1$
and $\alpha^2$
to $0$, then it certainly is not.
\end{example}

Corollary \ref{cd2} also does not hold without the assumption of finite dimensionality.

\begin{example} There exist (infinite-dimensional) division algebras $D$ in which every 
nonzero inner derivation is surjective \cite{C}. Every linear map  from $D$ to $D$ that vanishes at central elements is then a local inner derivation. However, such a map does not to be an inner derivation.
\end{example}

\section{Automorphisms, antiautomorphisms, and Jordan automorphisms}\label{s3}

Let $A$ be an algebra over a field $F$.
Recall that a {\em Jordan automorphism} of $A$ is a bijective linear map $J:A\to A$ satisfying
$$J(xy+yx)=J(x)J(y)+J(y)J(x)\quad\mbox{for all $x,y \in A$.}$$  If char$(F)\ne 2$, this condition  is equivalent 
to
 $J(x^2)=J(x)^2$ for all $x\in A$.
 Obvious examples of Jordan automorphisms are automorphisms and antiautomorphisms. These obvious examples are also the only examples if $A$ is a simple algebra
over a field $F$ with char$(F)\ne 2$. This is a special case of  the classical theorem of Herstein \cite{H} (the assumption from \cite{H} that the 
characteristic is not $3$ was later removed).

Our first lemma in this section essentially concerns central simple algebras, but for
 notational consistency 
we state it in a slightly different form.

\begin{lemma}\label{l2}
Let $A$ be a finite-dimensional simple algebra with center $Z$. If $J$ is a $Z$-linear Jordan automorphism of $A$, then $J(x)-x \in [A,A]$ for all $x\in A$.
\end{lemma}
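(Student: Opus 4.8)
The plan is to pass to a splitting field of $A$, where $A$ becomes a matrix algebra and Herstein's theorem together with the classical description of (anti)automorphisms of $M_n$ makes everything explicit.

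Since $A$ is finite-dimensional and simple, its center $Z$ is a field and $A$ is a central simple $Z$-algebra. Let $K$ be the algebraic closure of $Z$ and put $A_K=K\otimes_Z A$, so that $A_K\cong M_n(K)$ for some $n$. The map $J_K=\mathrm{id}_K\otimes J$ is again a $Z$-linear, in fact $K$-linear, Jordan automorphism: linearizing $J(x^2)=J(x)^2$ (here we use ${\rm char}(K)\ne 2$) shows that being a Jordan automorphism is a condition on the Jordan product $x\mapsto xy+yx$, and this condition passes to scalar extensions. Hence $J_K$ is a Jordan automorphism of $M_n(K)$. Moreover $[A_K,A_K]=K\otimes_Z[A,A]$, because the left-hand side is spanned by the elements $[k\otimes a,\,k'\otimes b]=kk'\otimes[a,b]$; via $A_K\cong M_n(K)$ this subspace is exactly the space of trace-zero matrices.

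Now I apply Herstein's theorem ($M_n(K)$ is simple and ${\rm char}(K)\ne 2$): $J_K$ is either an automorphism or an antiautomorphism of $M_n(K)$. Every automorphism of $M_n(K)$ is inner, hence of the form $X\mapsto PXP^{-1}$; and if $J_K$ is an antiautomorphism then $X\mapsto J_K(X)^{t}$ is an automorphism, so $J_K(X)=PX^{t}P^{-1}$ for a suitable invertible $P$. In either case $\tr\bigl(J_K(X)\bigr)=\tr(X)$ for all $X$, so $J_K(X)-X$ is trace-zero, i.e.\ $J_K(X)-X\in[A_K,A_K]$. Taking $X=1\otimes x$ with $x\in A$ gives $1\otimes\bigl(J(x)-x\bigr)\in K\otimes_Z[A,A]$; since $K\otimes_Z[A,A]$ meets the $Z$-form $1\otimes_Z A$ in precisely $1\otimes_Z[A,A]$ (extend a $Z$-basis of $[A,A]$ to one of $A$), it follows that $J(x)-x\in[A,A]$.

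The step I expect to be the real content is the antiautomorphism case, where one must know that antiautomorphisms of $M_n(K)$ still preserve the trace; splitting $A$ is precisely what turns this into the transparent statement about $X\mapsto PX^tP^{-1}$. I would also note that the automorphism case admits a direct, splitting-free proof: a $Z$-linear automorphism of the central simple algebra $A$ is inner by Skolem--Noether, say $J(x)=axa^{-1}$, and then $J(x)-x=axa^{-1}-a^{-1}(ax)=[ax,\,a^{-1}]\in[A,A]$.
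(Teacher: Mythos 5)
Your proof is correct and follows essentially the same route as the paper's: extend scalars to the algebraic closure of $Z$, apply Herstein's theorem to conclude that the extended map is an automorphism or antiautomorphism of $M_n(K)$, check trace preservation via Skolem--Noether and the transpose trick, and descend using a $Z$-basis argument. The only cosmetic differences are that you use the trace in the automorphism case where the paper writes the explicit commutator $[a,xa^{-1}]$, and that you spell out the descent step $1\otimes c\in K\otimes_Z[A,A]\implies c\in[A,A]$ in more detail than the paper does.
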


\begin{proof} 
Let $K$ be the algebraic closure of $Z$ and let $A_K=K\otimes A$ be the  scalar extension  of $A$ to $K$. Then  $\overline{J} = {\rm id}_K\otimes J$ is a Jordan automorphism, and hence an  automorphism or an antiautomorphism of $A_K\cong M_n(K)$. If $\overline{J}(u) - u\in [A_K,A_K]$ for every $u\in A_K$, then for $u=1\otimes x$ we  obtain that 
$1\otimes (J(x) - x)$ is equal to an element of the form $\sum_i k_i\otimes [x_i,y_i]$, which  implies that $J(x)-x\in [A,A]$. Therefore, it is enough to consider the case where $A$ is the $K$-algebra $M_n(K)$.

If $J$ is an automorphism, then $J$ is inner
by the Skolem-Noether Theorem, so there is an invertible  $a\in A$ such that $$J(x)-x =axa^{-1}-x= [a,xa^{-1}]\in [A,A]$$
for every $x\in A$. Assume  that $J$ is an antiautomorphism.   
Then $x\mapsto J(x^t)$, where $x^t$ is the transpose of $x$, is an automorphism of $A$. Hence, $J(x^t) =axa^{-1}$ for all $x\in A$, or written equivalently,  $J(x)=ax^ta^{-1}$.
Since $ax^ta^{-1}$ and $x$ have the same trace it follows that $J(x)-x\in [A,A]$.
\end{proof}

We now begin our study of condition  $T(x)^3- x^3 \in [A,A]$. 

\begin{lemma}\label{ldo}
Let $A$ be an algebra over a field $F$ with {\rm char}$(F)\ne 2,3$. If a linear map $T:A\to A$ satisfies  $T(x)^3- x^3 \in [A,A]$ for every $x\in A$, then 
 \begin{equation}\label{h1}T(x)^2 T(y) - x^2y\in[A,A]\quad\mbox{for all $x,y \in A$.}
\end{equation}
Moreover, if  $A$ is finite-dimensional  and $\ker T\cap {\rm rad}(A)=\{0\}$, then $T$ is bijective and leaves every maximal ideal of $A$ invariant.
\end{lemma}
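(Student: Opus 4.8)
The plan is to derive \eqref{h1} by linearizing the cubic condition and then to bootstrap the bijectivity and ideal-invariance statements from it. First I would linearize $T(x)^3 - x^3 \in [A,A]$. Replacing $x$ by $x + ty$ for a scalar $t$, expanding $(x+ty)^3$ and $T(x+ty)^3 = (T(x)+tT(y))^3$, and collecting the coefficient of a fixed power of $t$ (using char$(F)\ne 2,3$ so that the relevant integer coefficients are invertible), I get that the ``symmetrized'' cubic expressions in $T(x),T(y)$ and in $x,y$ differ by an element of $[A,A]$. Concretely, the coefficient of $t$ yields
\begin{equation*}
T(y)T(x)^2 + T(x)T(y)T(x) + T(x)^2T(y) - \bigl(yx^2 + xyx + x^2y\bigr) \in [A,A].
\end{equation*}
Since modulo $[A,A]$ one may cyclically permute factors, each of the three terms $T(y)T(x)^2$, $T(x)T(y)T(x)$, $T(x)^2T(y)$ is congruent to $T(x)^2T(y)$, and likewise on the $x$-side; so the displayed relation becomes $3\bigl(T(x)^2T(y) - x^2y\bigr)\in[A,A]$, and dividing by $3$ gives exactly \eqref{h1}.

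Next, assume $A$ is finite-dimensional and $\ker T \cap \operatorname{rad}(A) = \{0\}$. To handle maximal ideals, I would argue just as in the proof of Theorem \ref{d}: fix a maximal ideal $M$, so $A/M$ is a finite-dimensional simple algebra. Taking $y \in M$ in \eqref{h1} shows $T(x)^2 T(y) \in M + [A,A]$ for all $x$; but I need $T(x)$ to range over a set large enough to invoke Remark \ref{l1a} in $A/M$. Here the point is that once bijectivity of $T$ is known, $\{T(x) : x \in A\} = A$, so setting $c = T(y) + M$ we get $u^2 c \in [A/M, A/M]$ for every $u \in A/M$ — wait, Remark \ref{l1a} is phrased as $c x^2 \in [A,A]$; but by the cyclic-permutation trick $u^2 c \equiv c u^2$ modulo $[A/M,A/M]$, so Remark \ref{l1a} applies and gives $c = 0$, i.e. $T(M) \subseteq M$. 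So the real work is establishing bijectivity first, and then invariance of maximal ideals follows.

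For bijectivity, I would first show $\ker T = \{0\}$. Suppose $T(y) = 0$. From \eqref{h1} with this $y$, $x^2 y \in [A,A]$ for all $x$, and by the linearization/polarization argument in Remark \ref{l1a} (valid since char$(F)\ne 2$) this forces $xy \in [A,A]$ for all $x$ after... actually more carefully: $x^2 y \in [A,A]$ for all $x$, and polarizing gives $(xz + zx)y \in [A,A]$, in particular with $z = 1$ we get $2xy \in [A,A]$, hence $xy \in [A,A]$ for all $x \in A$; thus the left ideal (indeed two-sided ideal, by the computation in Lemma \ref{l1}) generated by $y$ lies in $[A,A]$. Reducing modulo $\operatorname{rad}(A)$, the image $\bar y$ of $y$ in the semisimple algebra $A/\operatorname{rad}(A) = A_1 \oplus \cdots \oplus A_r$ generates an ideal contained in $[A/\operatorname{rad}(A), A/\operatorname{rad}(A)]$; projecting to each simple summand and applying Lemma \ref{l1}, each component of $\bar y$ is $0$, so $y \in \operatorname{rad}(A)$. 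Then $y \in \ker T \cap \operatorname{rad}(A) = \{0\}$, so $\ker T = \{0\}$, and since $A$ is finite-dimensional, $T$ is bijective. Then the maximal-ideal argument of the previous paragraph goes through. The main obstacle I anticipate is the careful handling of the passage $x^2 y \in [A,A] \Rightarrow$ ``the ideal generated by $y$ lies in $[A,A]$'': the polarization step needs char $\ne 2$ and the existence of $1$ in $A/\operatorname{rad}(A)$ (which holds, being semisimple finite-dimensional), and one must be slightly careful that the ideal-generation computation from Lemma \ref{l1} is applied after reducing mod the radical, where $[A,A]$ is genuinely proper in each simple summand. Everything else is bookkeeping with congruences modulo $[A,A]$.
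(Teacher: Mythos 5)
Your proposal is correct and follows essentially the same route as the paper: polarize the cubic relation and use cyclic symmetry modulo $[A,A]$ together with $\mathrm{char}(F)\ne 2,3$ to obtain \eqref{h1}, then show $\ker T\subseteq \mathrm{rad}(A)$ by reducing modulo maximal ideals (equivalently, modulo $\mathrm{rad}(A)$ and projecting to the simple summands) and invoking Remark~\ref{l1a}/Lemma~\ref{l1}, and finally use the resulting surjectivity to get invariance of maximal ideals. The only point to watch is that the polarization step ``take $z=1$'' must be carried out after passing to a unital quotient such as $A/\mathrm{rad}(A)$ or $A/M$, since $A$ itself need not be unital --- a detail you already flag at the end.
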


\begin{proof}
 As in the preceding section, we  write $x\equiv y$ for $x-y\in [A,A]$.

Since char$(F)\ne 2$, replacing $x$ by $x\pm y$ in $T(x)^3\equiv x^3$ gives
$$T(x)^2 T(y) + T(x)T(y)T(x) + T(y)T(x)^2 \equiv  x^2 y + xyx + yx^2.$$
As $T(x)^2 T(y) \equiv T(x)T(y)T(x) \equiv T(y)T(x)^2$,  $x^2 y \equiv xyx \equiv yx^2$, and {\rm char}$(F)\ne 3$,  \eqref{h1} follows.

Now assume that  $A$ is finite-dimensional  and $\ker T\cap {\rm rad}(A)=\{0\}$.
Take $y\in\ker T$. From \eqref{h1} we see that $x^2 y\in [A,A]$ for all $x\in A$.  Hence, for any ideal $M$ of $A$ we have  $u^2(y+M)\in [A/M,A/M]$ for all $u\in A/M$. Assuming that $M$ is maximal it follows from
Remark \ref{l1a}  that $y\in M$. This implies that $y\in {\rm rad}(A)$ and so $y=0$ by our assumption. 
Thus, $T$ is bijective.

Finally, from \eqref{h1} it now follows that $w^2 T(y)\in M+[A,A]$ for every ideal $M$ of $A$, $y\in M$, and $w\in A$. As in the preceding paragraph we see that this implies $T(y)\in M$ if $M$ is maximal.
\end{proof}

We are ready to prove our second main result.

\begin{theorem}\label{a}
Let $A$ be a finite-dimensional semisimple algebra over a field $F$ with {\rm char}$(F)\ne 2,3$. The following conditions are equivalent for a linear map $T:A\to A$:
\begin{enumerate}
\item[{\rm (i)}] $T(x)^3- x^3 \in [A,A]$ for every $x\in A$.
\item[{\rm (ii)}]  
 There exist a Jordan automorphism $J$ of $A$ and an element $\alpha$ 
 from the center $Z$ of $A$ such that  $T(x)=\alpha J(x)$ for all $x\in A$. Moreover,  $J$ belongs to
   the multiplication algebra  $M(A)$ and  $\alpha$ satisfies
 $\alpha^3=1$. 
  \end{enumerate}
\end{theorem}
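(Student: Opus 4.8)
Throughout write $x\equiv y$ for $x-y\in[A,A]$. The implication (ii)$\Rightarrow$(i) is immediate: if $T(x)=\alpha J(x)$ with $\alpha^3=1$, then $T(x)^3=\alpha^3J(x)^3=J(x)^3=J(x^3)$, the last step because a Jordan automorphism in characteristic $\ne2$ satisfies $J(x^3)=J(x)^3$; and since $J\in M(A)$ it preserves each simple summand of $A$ and is linear over its center, so Lemma \ref{l2} applied to each summand gives $J(y)-y\in[A,A]$ for every $y$, whence $T(x)^3-x^3=J(x^3)-x^3\in[A,A]$.

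For (i)$\Rightarrow$(ii) the plan is to reduce first to $A$ simple. By Lemma \ref{ldo} (with ${\rm rad}(A)=\{0\}$), $T$ is bijective and leaves every maximal ideal of $A$ invariant, hence leaves each simple summand $A_i$ of $A=A_1\oplus\dots\oplus A_r$ invariant; as $[A,A]\cap A_i=[A_i,A_i]$, the restriction $T|_{A_i}$ again satisfies (i), and granting (i)$\Rightarrow$(ii) for each simple $A_i$ one assembles (ii) for $T$ by taking $\alpha=\sum_i\alpha_i$ and $J=\bigoplus_iJ_i$. So assume $A$ simple with center $Z$. Exactly as in the proof of Theorem \ref{d}, substituting $zy$ for $y$ in \eqref{h1} ($z\in Z$) and comparing with $z$ times \eqref{h1} yields $T(x)^2\bigl(T(zy)-zT(y)\bigr)\equiv0$; since $T$ is bijective and $T(x)^2c\equiv cT(x)^2$, Remark \ref{l1a} forces $T(zy)=zT(y)$, so $T$ is $Z$-linear and hence $T\in M(A)$ by \cite[Lemma 1.25]{INCA}. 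Now fix a nonzero linear functional $t\colon A\to Z$ with $\ker t=[A,A]$ (it exists since $[A,A]$ has $Z$-codimension $1$), set $B(a,b)=t(ab)$ (a symmetric bilinear form, nondegenerate by Lemma \ref{l1}), and let $T^{\ast}$ be the adjoint of $T$ with respect to $B$; concretely $T^{\ast}(x)=\sum_ib_ixa_i$ whenever $T(x)=\sum_ia_ixb_i$. Cyclically shifting the last factor shows $T(x)^2T(y)\equiv T^{\ast}\bigl(T(x)^2\bigr)y$, so \eqref{h1} and Lemma \ref{l1} give the key identity $T^{\ast}\bigl(T(x)^2\bigr)=x^2$ for all $x$; polarizing, $T^{\ast}\bigl(T(x)T(y)+T(y)T(x)\bigr)=xy+yx$.

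The crucial---and, I expect, hardest---step is to show that $e:=T(1)$ lies in $Z$. Writing $T^{\ast}\bigl(T(x)T(y)\bigr)=xy+C(x,y)$, the polarized identity says $C$ is antisymmetric, while cyclically shifting $T(x)T(y)T(x)$ and $T(x)T(y)^2$ (and using \eqref{h1} together with commutator identities) gives $C(x,y)x\in[A,A]$ and $C(x,y)y\in[A,A]$ for all $x,y$. If one can conclude $C(x,1)=0$ (hence also $C(1,y)=0$), then $T^{\ast}(T(x)e)=x=T^{\ast}(eT(x))$, and injectivity of $T^{\ast}$ forces $[e,T(x)]=0$ for all $x$, i.e.\ $e\in Z$ since $T$ is onto. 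To obtain this I would pass to the algebraic closure $K$ of $Z$: as $T$ is $Z$-linear, $T_K={\rm id}_K\otimes T$ acts on $A_K\cong M_n(K)$, is bijective, and still satisfies (i)---indeed (i) is equivalent, since ${\rm char}(F)\ne2,3$, to preservation of the symmetric trilinear form $(a,b,c)\mapsto t(abc+acb)$, a condition stable under scalar extension---and in $M_n(K)$, where rank-one idempotents are available, one checks directly that $T_K(1)$ is a scalar; this forces $e=T(1)\in Z$.

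Granting $e\in Z$, the rest is a short computation. Set $\beta:=e$; since $T$ is bijective, $\beta\ne0$, so $\beta$ is invertible in the field $Z$, and $J:=\beta^{-1}T$ satisfies $J(1)=1$ and lies in $M(A)$. Writing $T=\beta J$, $T^{\ast}=\beta J^{\ast}$ in the key identity gives $\beta^3J^{\ast}\bigl(J(x)^2\bigr)=x^2$; polarizing and putting $y=1$ yields $\beta^3J^{\ast}J={\rm id}$, whence $J^{\ast}=\beta^{-3}J^{-1}$, and substituting back gives $J(xy+yx)=J(x)J(y)+J(y)J(x)$, i.e.\ $J$ is a Jordan automorphism of $A$. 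By Herstein's theorem \cite{H}, $J$ is an automorphism or an antiautomorphism of $A$; either way $J$ preserves the reduced trace, so $J^{\ast}=J^{-1}$, and comparing with $\beta^3J^{\ast}J={\rm id}$ gives $\beta^3=1$. This establishes (ii) for $A$ simple, hence in general.
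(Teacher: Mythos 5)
Your overall architecture coincides with the paper's: the reduction to the simple case via Lemma \ref{ldo}, the $Z$-linearity argument giving $T\in M(A)$, and your adjoint $T^{\ast}$ is precisely the paper's map $W(x)=\sum_i b_ixa_i$, so your key identity $T^{\ast}(T(x)^2)=x^2$ is the paper's \eqref{h2}. Your endgame, granted $\beta=T(1)\in Z$, is sound and slightly different from the paper's: from $\beta^3J^{\ast}J={\rm id}$ you read off both that $J$ is a Jordan automorphism and, via Herstein plus trace-preservation ($J^{\ast}=J^{-1}$), that $\beta^3=1$, whereas the paper gets $\alpha^3=1$ from Lemma \ref{l2} together with the existence of elements whose cube does not lie in $[A,A]$. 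That variant is fine, as is your assembly of the semisimple case from the simple summands.

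The gap is exactly where you predicted it: the proof that $e=T(1)$ is central. Your derived facts give only $C(x,1)x\in[A,A]$ and $C(x,1)\in[A,A]$, which do not force $C(x,1)=0$, and the fallback ``pass to $M_n(K)$ and check directly using rank-one idempotents'' is an assertion, not an argument --- this is the one step of the proof that requires a genuine trick (which the paper credits to M.~Chebotar), so it cannot be waved through. The missing computation is: from $S(x^2)=T(x)^2$ (your $S=(T^{\ast})^{-1}$) and $2S(x)=T(x)u+uT(x)$ with $u=T(1)$, an idempotent $e$ satisfies $2T(e)^2=T(e)u+uT(e)$; bracketing with $T(e)$ gives $0=[T(e),T(e)u+uT(e)]=[T(e)^2,u]=[S(e),u]$, and since $M_n(K)$ is linearly spanned by idempotents and $S$ is surjective, $u$ commutes with everything and is therefore a scalar. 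Until you supply this (or an equivalent) argument, the implication (i)$\Rightarrow$(ii) is incomplete.
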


\begin{proof} 
(i)$\implies$(ii). Lemma \ref{ldo} tells us that $T$ is bijective and leaves maximal ideals of $A$ invariant. The latter implies that every simple component of $A$ is also invariant under $T$. 
 We may therefore assume   without loss of generality that  $A$ is a simple algebra.

  
 
 Take $z$ from the center $Z$ of $A$. Replacing $y$ by $z y$ in \eqref{h1} we obtain $$T(x)^2 T(z y) \equiv z x^2 y\equiv T(x)^2 z T(y).$$
Since $T$ is surjective, we thus have $u^2 \big(T(z y)- z T(y)\big)\equiv 0$ for every $u\in A$.  
Hence, $T(z y)= z T(y)$ by Remark
 \ref{l1a}. This implies that  $T\in M(A)$  \cite[Lemma 1.25]{INCA}, that is,  there exist  $a_i,b_i\in A$ such that $$T(x)=\sum_i a_ixb_i\quad\mbox{for all $x\in A$}.$$ Hence,
$$T(x)^2 T(y) = \sum_i T(x)^2a_i y b_i \equiv \sum_i b_iT(x)^2a_i y,$$
which along with \eqref{h1} gives $$\Big( \sum_i b_iT(x)^2a_i - x^2\Big)y\equiv 0\quad\mbox{for all $x,y \in A$.}$$
Therefore, by Lemma \ref{l1}, 
\begin{equation}\label{h2}W(T(x)^2) = x^2\quad\mbox{for all $x\in A$,}\end{equation}
where   $$W(x)=\sum_i  b_i x a_i.$$
Substituting $x+1$ for $x$ in \eqref{h2} it follows that 
$$W(T(x)T(1) + T(1)T(x))= 2x.$$ This shows that $W$ is invertible and that  $S=W^{-1}$ satisfies  \begin{equation}\label{h300}2S(x)= T(x)T(1)+ T(1)T(x)  \quad\mbox{for all $x\in A$.}\end{equation} 
 By \eqref{h2},
$S(x^2)=T(x)^2 $ for all $x\in A$, and hence
\begin{equation}\label{h30}S(xy +yx)=T(x)T(y)+T(y)T(x) \quad\mbox{for all $x,y\in A$.}\end{equation}
   As above, we denote by $K$ the algebraic closure of $Z$ and by
 $A_K = K\otimes A$ the scalar extension  of $A$ to $K$. 
 Observe that $\overline{S} = {\rm id}_K\otimes S$ and $\overline{T} = {\rm id}_K\otimes T$ are $K$-linear maps of $A_K\cong M_n(K)$ satisfying 
\begin{equation}\label{h3}\overline{S}(xy +yx)=\overline{T}(x)\overline{T}(y)+\overline{T}(y)\overline{T}(x) \quad\mbox{for all $x,y\in A_K$.}\end{equation}
Take an idempotent $e\in A_K$. From \eqref{h3} we see that $\overline{S}(e) = \overline{T} (e) ^2$ and $2\overline{S}(e) = \overline{T}(e) u + u\overline{T}(e)$
where $u=\overline{T}(1\otimes 1)$.
Hence, $2\overline{T}(e)^2 = \overline{T}(e) u + u\overline{T}(e)$, and so $\overline{T}(e) u + u\overline{T}(e)$ commutes with $\overline{T}(e)$. Observe that this can be equivalently stated as  
that $\overline{T}(e)^2 $ commutes with $u$.  That is, $\overline{S}(e)$ commutes with $u$ for every idempotent $e\in A_K$. The algebra $A_K\cong M_n(K)$ is linearly spanned by its idempotents (indeed, observe that the matrix unit $e_{ij}$, $i\ne j$, is a difference of two idempotents:  $e_{ij} = (e_{ii} + e_{ij})-e_{ii}$). As $\overline{S}$ is surjective it follows that $u$ is a scalar multiple of $1\otimes 1$. Since $u =\overline{T}(1\otimes 1)=1\otimes T(1)$, this shows that $\alpha=T(1)\in Z$. Of course, $\alpha\ne 0$. 
From 
\eqref{h300} we see that $ \alpha^{-1} S(x) =T(x)$ for all $x\in A$. Hence, \eqref{h30} implies that $J(x) = \alpha^{-1} T(x)$ 
is a Jordan automorphism. 

 It remains to show that $\alpha ^3=1$. 
 By Lemma \ref{l2}, $J(x)^3=J(x^3) \equiv x^3$, and by our assumption, $T(x)^3 \equiv x^3$.  Hence, $J(x)^3\equiv T(x)^3$. Since 
 $T(x)=\alpha J(x)$ it follows that $(\alpha^3 - 1)J(x)^3\equiv 0$. As we saw at the end of the proof of Theorem \ref{d}, there exist elements in $A$ whose cube does not lies in $[A,A]$.
 Since $J$ is surjective  it follows that $\alpha^3 - 1=0$.
 
 (ii)$\implies$(i).  As
 $T$ lies in  $M(A)$,
 it leaves every ideal of $A$ invariant. We may therefore assume without loss of generality that $A$ is simple. 
 Now, $J\in M(A)$  implies that 
  $J$ is $Z$-linear, and so 
 Lemma \ref{l2} shows that $J(x^3)\equiv x^3$. 
 Since $$J(x^3) = J(x) ^3 = \alpha^{-3}T(x)^3 = T(x)^3,$$ this proves (i).
\end{proof}

The author is thankful to Misha Chebotar for suggesting the trick with idempotents after equation \eqref{h3}.

The purpose of the following  example  is to show  that  the simpler condition  $T(x)^2- x^2 \in [A,A]$ is not characteristic for Jordan automorphisms (compare Example \ref{rd}).

\begin{example}\label{rh}
If $a,b\in A$ are such that $a ^2=ab=ba=b^2=0$, then $T(x) = x + axb-bxa$ satisfies $$T(x)^2 - x^2 = [xa,xb] + [ax,bx]\in [A,A]$$ and $T(1)=1$. However, $T$ is not always a Jordan automorphism.
\end{example}

The next two examples show that Theorem \ref{a} does not hold if char$(F)$ is $2$ or $3$.

\begin{example}\label{eaut1} Let $F$ be a field with char$(F)=2$, let  $A=M_2(F)$, and let $T:A\to A$ be given by $T(x)=x+{\rm tr}(x)1$. It is easy to see that $T$ is neither an automorphism nor an antiautomorphism.
However, 
\begin{align*}T(x)^3 - x^3 &= {\rm tr}(x)\big(x^2 + {\rm tr}(x)x\big) + {\rm tr}(x)^31\\
& =  {\rm tr}(x)  {\det}(x)1 +  {\rm tr}(x)^31 \\&=  \big({\rm tr}(x)  {\det}(x) +  {\rm tr}(x)^3\big)[e_{12},e_{21}]\in [A,A].\end{align*}
Moreover, $T(1)=1$.
Thus, $T$ satisfies condition (i), but does not satisfy condition (ii).
\end{example}

\begin{example}\label{remaut} If $A$ is an algebra over a field $F$ with char$(F)= 3$,
then for all $x,y\in A$ we have
$$(x+y)^3 - x^3 - y^3 = [x,[x,y]] + [y,[y,x]]\in [A,A]$$
(compare Remark \ref{remder}). Taking any $a\in A$ such that $a^3 \in [A,A]$ (say, $a^3=0$) and any linear functional $\varphi$ on $A$, we thus see that the map $T:A\to A$ given by $T(x)=x+\varphi(x)a$ 
satisfies condition (i). However, $T$ does not necessarily satisfy condition (ii).
\end{example}

The next corollary considers general finite-dimensional algebras.

\begin{corollary}\label{ac2}
Let $A$ be a finite-dimensional algebra over a field $F$ with {\rm char}$(F)\ne 2,3$. 
If a linear map $T:A\to A$ satisfies
 $T(x)^3- x^3 \in [A,A]$ for every $x\in A$, then $T(x^4)- T(x)^4\in {\rm rad}(A)$ for every $x\in A$. Moreover, if $A$ is unital and $T(1)=1$, then  $T(x^2)- T(x)^2\in {\rm rad}(A)$ for every $x\in A$.
\end{corollary}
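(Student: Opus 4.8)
The plan is to descend to the semisimple quotient $\bar A:=A/{\rm rad}(A)$, where Theorem \ref{a} applies, and so the first thing to arrange is that $T$ induces a well-defined linear map on $\bar A$; this amounts to $T({\rm rad}(A))\subseteq{\rm rad}(A)$, which I expect to be the real obstacle, since --- unlike in Lemma \ref{ldo} --- we are not assuming $\ker T\cap{\rm rad}(A)=\{0\}$ and so cannot use the bijectivity of $T$. I would start from \eqref{h1}, i.e.\ $T(x)^2T(y)-x^2y\in[A,A]$ for all $x,y$, which holds by the first part of Lemma \ref{ldo}, and replace the missing bijectivity by a surjectivity proved by hand.

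Write $\pi\colon A\to\bar A$ for the quotient map, so that $\pi([A,A])=[\bar A,\bar A]$ and $\bar A$ is a direct sum of finite-dimensional simple ideals $\bar A_i$. First, I would show $T^{-1}({\rm rad}(A))\subseteq{\rm rad}(A)$: if $T(x_0)\in{\rm rad}(A)$, then feeding $x_0$ into the second slot of \eqref{h1} gives $T(y)^2T(x_0)-y^2x_0\in[A,A]$, and since $T(x_0)$ lies in the ideal ${\rm rad}(A)$ we get $y^2x_0\in{\rm rad}(A)+[A,A]$, i.e.\ $\pi(y)^2\pi(x_0)\in[\bar A,\bar A]$ for every $y\in A$; projecting onto each $\bar A_i$, Remark \ref{l1a} forces $\pi(x_0)=0$, that is, $x_0\in{\rm rad}(A)$. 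Since $\ker(\pi\circ T)=T^{-1}({\rm rad}(A))$, this bounds $\dim\ker(\pi\circ T)\le\dim{\rm rad}(A)$, so a dimension count shows $\pi\circ T$ is onto $\bar A$. Second, for $z\in{\rm rad}(A)$, \eqref{h1} with $z$ in the second slot gives $T(x)^2T(z)-x^2z\in[A,A]$, and $x^2z\in{\rm rad}(A)$ yields $\pi(T(x))^2\pi(T(z))\in[\bar A,\bar A]$ for all $x$; since $\pi\circ T$ is onto, $\pi(T(x))$ runs over all of $\bar A$, and another componentwise use of Remark \ref{l1a} gives $\pi(T(z))=0$. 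Hence $T({\rm rad}(A))\subseteq{\rm rad}(A)$.

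Consequently $\bar T\colon\bar A\to\bar A$, $\bar T(\pi(x)):=\pi(T(x))$, is well defined (and bijective, since $\pi\circ T=\bar T\circ\pi$ is onto), and applying $\pi$ to $T(x)^3-x^3\in[A,A]$ shows $\bar T(u)^3-u^3\in[\bar A,\bar A]$ for all $u\in\bar A$. By Theorem \ref{a} there are a central $\alpha$ with $\alpha^3=1$ and a Jordan automorphism $J$ of $\bar A$ such that $\bar T(u)=\alpha J(u)$. Applying $J(a^2)=J(a)^2$ twice gives $J(u^4)=J(u)^4$, and since $J(u)=\alpha^{-1}\bar T(u)$ with $\alpha^{-1}$ central, $\bar T(u^4)=\alpha J(u^4)=\alpha(\alpha^{-1}\bar T(u))^4=\alpha^{-3}\bar T(u)^4=\bar T(u)^4$; reading this at $u=\pi(x)$ gives $\pi(T(x^4))=\pi(T(x)^4)$, i.e.\ $T(x^4)-T(x)^4\in{\rm rad}(A)$. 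For the final assertion, if $A$ is unital and $T(1)=1$, then $\bar T(\pi(1))=\pi(1)$; as every Jordan automorphism of a unital algebra fixes the identity (apply $2J(u)=J(1)J(u)+J(u)J(1)$ and the surjectivity of $J$), this forces $\alpha=\pi(1)$, and the same computation with squares in place of fourth powers gives $\bar T(u^2)=\alpha^{-1}\bar T(u)^2=\bar T(u)^2$, i.e.\ $T(x^2)-T(x)^2\in{\rm rad}(A)$. The crux is the radical-invariance established in the previous paragraph; the rest is routine bookkeeping around Theorem \ref{a}.
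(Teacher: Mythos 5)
Your proof is correct, and its overall strategy --- pass to $\overline{A}=A/{\rm rad}(A)$, apply Theorem \ref{a}, and compute with $\overline{T}=\alpha J$ --- is the same as the paper's. The genuine difference is at the radical-invariance step. The paper disposes of it in one line by citing Lemma \ref{ldo}; but the ``moreover'' part of that lemma, which is what yields invariance of maximal ideals, is stated under the hypothesis $\ker T\cap{\rm rad}(A)=\{0\}$, which Corollary \ref{ac2} does not assume, so the citation does not literally apply (indeed $T$ need not be injective here). You correctly identified this as the crux and supplied a self-contained patch: from \eqref{h1} and a componentwise application of Remark \ref{l1a} you first get $T^{-1}({\rm rad}(A))\subseteq{\rm rad}(A)$, hence $\dim\ker(\pi\circ T)\le\dim{\rm rad}(A)$ and $\pi\circ T$ is onto $\overline{A}$ by a dimension count; this surjectivity then substitutes for the bijectivity of $T$ in the lemma's argument and gives $T({\rm rad}(A))\subseteq{\rm rad}(A)$. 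This is exactly the right repair, and it is the main thing your write-up adds over the paper's proof; the remaining computations ($\overline{T}(u^4)=\overline{T}(u)^4$ from $\alpha^3=1$, and $\alpha=1$ when $T(1)=1$ since a Jordan automorphism fixes the identity) match the paper's.
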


\begin{proof}  Lemma \ref{ldo} implies that  $T({\rm rad}(A))\subseteq {\rm rad}(A)$. Therefore, we can  define $\overline{T}:A/{\rm rad}(A)\to A/{\rm rad}(A)$ by
$$\overline{T}(x+ {\rm rad}(A)) = T(x)+ {\rm rad}(A).$$
Since 
$$\overline{T}(v)^3 - v^3\in [A/{\rm rad}(A),A/{\rm rad}(A)]\quad\mbox{for all $v\in A/{\rm rad}(A)$}$$
it follows from Theorem \ref{a} that there exist a Jordan automorphism $\overline{J}$ of $A/{\rm rad}(A)$ and an element $\overline{\alpha}$ 
 from the center of $A/{\rm rad}(A)$  such that  $\overline{\alpha}^3 =1$ and $\overline{T}(v)=\overline{\alpha}\overline{J}(v)$ for all $v\in  A/{\rm rad}(A)$. Accordingly,
 $$\overline{T}(v^4) = \overline{\alpha}\overline{J}(v^4) = \overline{\alpha}\overline{J}(v)^4 =   \overline{\alpha}^4\overline{J}(v)^4 = \overline{T}(v)^4$$
 for all $v\in  A/{\rm rad}(A)$, which shows that $T(x^4)- T(x)^4\in {\rm rad}(A)$ for all $x\in A$. Finally, if $A$ is unital and $T(1)=1$, then also $\overline{T}(1) = 1$ and hence $\overline{\alpha}=1$. Thus,
 $\overline{T}$ is a Jordan automorphism and so $T(x^2)- T(x)^2\in {\rm rad}(A)$ for every $x\in A$.
\end{proof}

Assuming that the field  $F$ is  perfect, which makes it possible for us to use the Wedderburn Principal Theorem, we obtain a nicer result that is more similar to Theorem \ref{d}.  We will assume for simplicity that  $A$ is unital and $T(1)=1$.

\begin{corollary}\label{ac3}
Let $A$ be a unital finite-dimensional algebra over a perfect field $F$ with {\rm char}$(F)\ne 2,3$. 
If a linear map $T:A\to A$ satisfies $T(1)=1$ and
 $T(x)^3- x^3 \in [A,A]$ for every $x\in A$, then 
 $T$ is the sum of a Jordan endomorphism of $A$ and  a linear map from $A$ to {\rm rad}$(A)$.
\end{corollary}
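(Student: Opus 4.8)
The plan is to run the same reduction as in Corollary~\ref{ac2}, pass to $A/{\rm rad}(A)$, use Theorem~\ref{a} there to obtain a Jordan automorphism $\overline{J}$ of the semisimple quotient with $T(x)+{\rm rad}(A)=\overline{J}(x)+{\rm rad}(A)$ (note that $T(1)=1$ forces $\overline{\alpha}=1$, so the central scalar disappears), and then \emph{lift} $\overline{J}$ to a Jordan endomorphism $J$ of $A$ so that $T-J$ has image in ${\rm rad}(A)$. The lifting is precisely where perfectness of $F$ enters, via the Wedderburn Principal Theorem: since $F$ is perfect, $A={\rm rad}(A)\oplus S$ as vector spaces for some semisimple subalgebra $S\cong A/{\rm rad}(A)$, and the canonical projection $\pi:A\to A/{\rm rad}(A)$ restricts to an algebra isomorphism $S\xrightarrow{\ \sim\ } A/{\rm rad}(A)$.

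The construction of $J$ goes as follows. Let $\sigma:A/{\rm rad}(A)\xrightarrow{\ \sim\ }S$ be the inverse of $\pi|_S$, and set $J=\sigma\circ\overline{J}\circ\pi:A\to A$. Then $J$ is linear, and it is a Jordan homomorphism because $\pi$ is an (algebra, hence Jordan) homomorphism, $\overline{J}$ is a Jordan homomorphism, and $\sigma$ is an algebra homomorphism; composites of Jordan homomorphisms with algebra homomorphisms on either side are Jordan homomorphisms, so $J(x^2)=J(x)^2$ for all $x\in A$. (Here I use char$(F)\ne2$ to pass freely between the two forms of the Jordan identity.) Thus $J$ is a Jordan endomorphism of $A$. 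Finally, for every $x\in A$,
\begin{equation*}
\pi\big(T(x)-J(x)\big)=\overline{T}(x+{\rm rad}(A))-\overline{J}(x+{\rm rad}(A))=\overline{J}\big(x+{\rm rad}(A)\big)-\overline{J}\big(x+{\rm rad}(A)\big)=0,
\end{equation*}
so $T(x)-J(x)\in{\rm rad}(A)$. Hence $x\mapsto T(x)-J(x)$ is a linear map from $A$ into ${\rm rad}(A)$, and $T=J+(T-J)$ is the asserted decomposition.

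The only genuinely non-routine point is the invocation of the Wedderburn Principal Theorem to obtain the semisimple complement $S$ (this is exactly where perfectness of $F$ is indispensable — without it the radical need not split off as a subalgebra), together with the elementary but worth-stating fact that composing $\overline{J}$ with the algebra isomorphisms $\pi|_S$ and $\sigma$ preserves the Jordan-homomorphism property. Everything else is a direct transcription of the argument already given in Corollary~\ref{ac2}: Lemma~\ref{ldo} supplies $T({\rm rad}(A))\subseteq{\rm rad}(A)$ so that $\overline{T}$ is well defined, and Theorem~\ref{a} applied to the semisimple algebra $A/{\rm rad}(A)$ (whose characteristic is still $\ne2,3$) supplies $\overline{J}$ and $\overline{\alpha}$, with $\overline{\alpha}=1$ because $\overline{T}(1)=1$. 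Note that $J$ need not be surjective — its image is the subalgebra $S$ — which is why the statement only claims a Jordan endomorphism rather than a Jordan automorphism.
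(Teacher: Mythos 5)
Your proof is correct and follows essentially the same route as the paper: both invoke the Wedderburn Principal Theorem to split off a semisimple subalgebra $S$ and both reduce to Theorem \ref{a} applied to $A/{\rm rad}(A)$ (the paper via Corollary \ref{ac2}), and your lifted map $J=\sigma\circ\overline{J}\circ\pi$ is in fact the very same map as the paper's $\pi T$, the projection of $T$ onto $S$ along ${\rm rad}(A)$. The only cosmetic difference is that you verify the Jordan property of $J$ by composing algebra and Jordan homomorphisms, whereas the paper obtains it by applying the projection to the relation $T(x^2)-T(x)^2\in{\rm rad}(A)$.
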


\begin{proof}
By the Wedderburn Principal Theorem, $A$ contains a subalgebra $S$ (isomorphic to $A/{\rm rad}(A)$) such that $A$ is  the vector space direct sum of $S$ and rad$(A)$. Let $\pi$ be the projection on $S$ along rad$(A)$.  As $T-\pi T$ has image in rad$(A)$ we must only prove that $\pi T$ is a Jordan endomorphism. 
Now, 
 Corollary \ref{ac2} tells us that $\pi(T(x^2)- T(x)^2)=0$, and since 
$\pi$ is an endomorphism this can be written as $(\pi T)(x^2)=(\pi T)(x) ^2$. 
\end{proof}


We continue with an analog of Corollary \ref{cd2}. 

\begin{corollary}\label{ca}
Let $A$ be a finite-dimensional semisimple algebra over a field $F$ with {\rm char}$(F)\ne 2,3$. Then every local inner automorphism
$T:A\to A$ is a Jordan automorphism belonging to $M(A)$.
\end{corollary}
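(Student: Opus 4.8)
In the same spirit as Corollary \ref{cd2}, the plan is to verify that a local inner automorphism satisfies condition (i) of Theorem \ref{a} and then read off the conclusion. Unwinding the definition, $T$ being a local inner automorphism means that for each $x\in A$ there is an invertible element $a_x\in A$ with $T(x)=a_xxa_x^{-1}$. The first step is the elementary observation that then
$$T(x)^3-x^3=a_xx^3a_x^{-1}-x^3=[a_x,\,x^3a_x^{-1}]\in[A,A]\qquad\text{for all }x\in A,$$
so $T$ satisfies (i).

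Applying Theorem \ref{a}, we obtain a Jordan automorphism $J$ of $A$ lying in $M(A)$ and a central element $\alpha$ with $\alpha^3=1$ such that $T(x)=\alpha J(x)$ for all $x\in A$. It remains to show that $\alpha=1$, so that $T=J$. Here I would use that each $a_x$ is invertible, which gives $T(1)=a_1\cdot 1\cdot a_1^{-1}=1$, while also $T(1)=\alpha J(1)$; hence it suffices to know $J(1)=1$. This is the standard fact that a surjective Jordan automorphism of a unital algebra fixes the unit: putting $e=J(1)$, one has $e=J(1^2)=J(1)^2=e^2$, and the identity $J(1\cdot x+x\cdot 1)=J(1)J(x)+J(x)J(1)$ together with surjectivity of $J$ yields $ey+ye=2y$ for every $y\in A$; multiplying this relation by $e$ on the left and on the right then shows that $e$ is central and acts as the identity on $A$, so $e=1$. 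Thus $\alpha=T(1)=1$, and $T=J$ is a Jordan automorphism belonging to $M(A)$.

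Given Theorem \ref{a}, this argument is short and presents no genuine difficulty; the only point needing a little care is the deduction $\alpha=1$, which is exactly where the normalization $T(1)=1$ coming from genuine (unital) inner automorphisms is used — compare the corresponding step $\overline{\alpha}=1$ in the proof of Corollary \ref{ac2}.
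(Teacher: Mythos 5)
Your proposal is correct and follows the paper's own proof: verify condition (i) of Theorem \ref{a} via the commutator identity $T(x)^3-x^3=[a_x,x^3a_x^{-1}]$, then use $T(1)=1$ to force $\alpha=1$. The paper compresses the last step into the single remark ``As $T(1)=1$, Theorem \ref{a} gives the desired conclusion''; you merely spell out the standard fact $J(1)=1$ that this remark relies on.
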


\begin{proof}Our assumption can be read as that  for each $x\in A$, there is an invertible $a_x\in A$ such that $T(x)=a_xx a_x^{-1}$. Hence, 
$$T(x)^3 - x^3 = [a_x x ^3, a_x^{-1}]\in [A,A].$$ As $T(1)=1$, Theorem \ref{a} gives the desired conclusion.
\end{proof}

There are many algebras in which every local automorphism is an automorphism. However,
 the matrix algebra $M_n(F)$ is not one of them. Indeed, any matrix  $x \in M_n(F)$ is similar to  its transpose  
$x ^t$, so 
$x\mapsto x^t$ is an example of a local inner automorphism which is not an automorphism but an antiautomorphism. This explains why Jordan automorphisms appear in the conclusion of Corollary \ref{ca}. Moreover, it indicates that 
 in just about any reasonable class of finite-dimensional algebras, the question whether local Jordan automorphisms are Jordan automorphisms is more natural than the usual question whether  local automorphisms are  automorphisms.

Our last theorem gives an answer to the question just  raised. In its proof we will use  the following elementary lemma. Actually, we will need only its special  case where each $m_i=3$. The general form, however, may be useful elsewhere.

\begin{lemma} 
\label{lv}
Let $F$ be an infinite field, let $V$and $W$  be  vector spaces over  $F$, and let $n$ and $m_1,\dots,m_n$ be positive integers. Suppose that $m_i$-linear maps
$f_i:V^{m_i}\to W$ are such that for each $x\in V$, at least one of the elements $f_1(x,\dots,x), \dots, f_n(x,\dots, x)$ is $0$. Then there exists an $i\in \{1,\dots, n\}$ such that
$f_i(x,\dots,x)=0$ for every $x\in V$.
\end{lemma}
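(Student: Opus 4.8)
The plan is to reduce to a single polynomial identity over the field and then exploit that an infinite field forces a polynomial to vanish identically. First I would fix a finite-dimensional subspace issue: although $V$ and $W$ may be infinite-dimensional, the statement is really about finitely many multilinear maps, so I would pass to any finite-dimensional subspace of $V$ and any finite-dimensional subspace of $W$ containing the relevant images, so that after choosing bases I may assume $V=F^d$ and $W=F^k$. Then each $f_i(x,\dots,x)$, as a function of $x=(t_1,\dots,t_d)\in F^d$, is a $W$-valued homogeneous polynomial map of degree $m_i$; writing it coordinatewise gives polynomials $f_i^{(1)},\dots,f_i^{(k)}\in F[t_1,\dots,t_d]$. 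The hypothesis says that for every point $x\in F^d$, there is some $i$ with $f_i^{(1)}(x)=\dots=f_i^{(k)}(x)=0$.

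Next I would encode ``for each $x$ at least one $f_i$ vanishes'' as a single polynomial vanishing everywhere. Consider the polynomial
\[
P(t_1,\dots,t_d) \;=\; \prod_{i=1}^{n}\Bigl(\sum_{j=1}^{k} f_i^{(j)}(t_1,\dots,t_d)^2\Bigr),
\]
or, to avoid any reliance on sums of squares being nonnegative (we are over an arbitrary infinite field, not an ordered one), better to argue differently: for a fixed $x$, ``$f_i(x,\dots,x)=0$'' means all $k$ coordinates vanish, so the set where $f_i(x,\dots,x)\ne 0$ is $\bigcup_j\{f_i^{(j)}\ne 0\}$, an open set in the Zariski topology. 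The hypothesis says $F^d = \bigcup_i Z(f_i)$ where $Z(f_i)=\bigcap_j Z(f_i^{(j)})$ is the common zero set of the $i$-th bundle. Equivalently, $\bigcap_i \bigl(F^d\setminus Z(f_i)\bigr)=\emptyset$. Now here is the key point: if no single $f_i$ vanished identically, then for each $i$ some coordinate polynomial $f_i^{(j_i)}$ is a nonzero polynomial, hence $F^d\setminus Z(f_i^{(j_i)})$ is a nonempty Zariski-open set, and so is its subset-complement; and over an infinite field a finite intersection of nonempty Zariski-open sets is nonempty — this is exactly the statement that a finite product of nonzero polynomials over an infinite field is a nonzero polynomial, hence has a non-root. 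Taking a common non-root $x_0$ of $f_1^{(j_1)},\dots,f_n^{(j_n)}$, we get $f_i(x_0,\dots,x_0)\ne 0$ for every $i$, contradicting the hypothesis. Therefore some $f_i$ vanishes identically on $F^d$, i.e.\ $f_i(x,\dots,x)=0$ for all $x\in V$.

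The main obstacle — and the only subtle point — is making sure the reduction to finitely many polynomials in finitely many variables is legitimate: given $x\in V$, the values $f_i(x,\dots,x)$ live in $W$, and to speak of ``coordinates'' I need a finite set covering all relevant values. This is handled by the multilinearity: once I restrict attention to proving $f_{i_0}(x,\dots,x)=0$ for all $x$ for a suitable $i_0$, it suffices to show that for each finite-dimensional subspace $V_0\subseteq V$ the restricted maps $f_i|_{V_0^{m_i}}$ have the property, since then choosing $V_0$ large enough and noting that the conclusion ``some $i$ works'' could a priori depend on $V_0$ — so I must be a little careful and instead argue by contradiction globally: if for every $i$ there is $x_i\in V$ with $f_i(x_i,\dots,x_i)\ne 0$, let $V_0=\linspan\{x_1,\dots,x_n\}$, a finite-dimensional subspace, and let $W_0$ be a finite-dimensional subspace of $W$ containing all $f_i(v,\dots,v)$ for $v$ ranging over a basis expansion — again by multilinearity it suffices to take $W_0$ spanned by the finitely many values $f_i(e_{s_1},\dots,e_{s_{m_i}})$ on basis vectors of $V_0$. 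Then apply the polynomial argument inside $V_0\cong F^{\dim V_0}$, $W_0\cong F^{\dim W_0}$ to produce $x_0\in V_0$ with all $f_i(x_0,\dots,x_0)\ne 0$, contradicting the hypothesis applied at $x_0\in V$. This contradiction completes the proof, and the use of $F$ infinite enters precisely once, in asserting that a nonzero polynomial over $F$ in finitely many variables does not vanish on all of $F^{\dim V_0}$.
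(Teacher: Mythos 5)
Your argument is correct and is essentially the paper's own proof: both argue by contradiction, restrict to the span of the witnesses $x_1,\dots,x_n$, reduce the $W$-valued maps to scalar polynomials (you via coordinates on a finite-dimensional subspace $W_0$, the paper via linear functionals $\tau_i$, which avoids the finite-dimensionality detour), and conclude that a product of $n$ nonzero polynomials cannot vanish identically over an infinite field. The differences are purely cosmetic.
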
 

\begin{proof}
Suppose the lemma is not true. Then for each  $i\in \{1,\dots, n\}$ there exists an $x_i\in V$ such that $f_i(x_i,\dots,x_i)\ne 0.$ 
Choose a linear functional $\tau_i$ on $W$ such that $\tau_i(f_i(x_i,\dots,x_i))\ne 0$. For any $z_1,\dots, z_n\in F$, define
$$p_i(z_1,\dots,z_n)=\tau_i(f_i(z_1 x_1 + \dots + z_n x_n, \dots, z_1 x_1 + \dots + z_n x_n)).$$
Note that the assumption of the lemma implies that for every $(z_1,\dots,z_n)\in F ^n$, there is an $i\in \{1,\dots, n\}$ such that
$p_i(z_1,\dots,z_n)=0$. 
Consequently,
\begin{equation}\label{loca}p_1(z_1,\dots,z_n)\cdots p_n(z_1,\dots,z_n) =0\end{equation}
for all  $(z_1,\dots,z_n)\in F ^n$.

 Since $f_i$ is $m_i$-linear, we may consider  $p_i$ as a polynomial in $z_1,\dots, z_n$.
Its  coefficient at $z_i ^{m_i}$ is  $\tau_i(f_i(x_i,\dots,x_i))$, so $p_i\ne 0$.
 The product $p_1\cdots p_n$ is therefore a nonzero polynomial too. However, \eqref{loca} shows that 
 this polynomial vanishes at every  $(z_1,\dots,z_n)\in F ^n$, which 
   contradicts the assumption that $F$ is infinite. 
\end{proof}

Lemma \ref{lv} will make it possible for us to use Theorem \ref{a} in the proof of the following theorem, provided of course that the field $F$ is infinite. The  theorem also holds for finite fields, but for them we will have to use a different method which is 
 more similar to standard methods for tackling local automorphisms.

\begin{theorem}\label{a2}
Let $A$ be a finite-dimensional simple algebra over a field $F$ with {\rm char}$(F)\ne 2,3$. Then every local Jordan automorphism
$T:A\to A$ is a Jordan automorphism.
\end{theorem}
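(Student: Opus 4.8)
The plan is to deduce the statement from Theorem \ref{a} after correcting $T$ by a global automorphism or antiautomorphism of $A$, with Lemma \ref{lv} the tool that makes the correction possible. I will give the argument for infinite $F$; for finite $F$ one has $A\cong M_n(F)$ and a separate, more computational argument (closer to the usual treatment of local automorphisms) is needed.

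First I would record the easy facts. For each $x$ fix a Jordan automorphism $T_x$ with $T(x)=T_x(x)$; then $T(1)=T_1(1)=1$ (Jordan automorphisms fix $1$), and since char$(F)\ne2$ Jordan automorphisms preserve cubes, so $T(x)^3=T_x(x)^3=T_x(x^3)$ for every $x$. By Herstein's theorem, recalled in the text, each $T_x$ is an automorphism or an antiautomorphism of the simple algebra $A$. I would then set up a small bookkeeping of ``twists'': let $q\colon A\to A/[A,A]$ be the quotient map, recall that $A/[A,A]$ is one-dimensional over the center $Z$, and that $Z$ is a finite field extension of $F$. Every automorphism or antiautomorphism $\rho$ of $A$ preserves $[A,A]$ and so induces an $F$-linear bijection $\overline\rho$ of $A/[A,A]$, with $\overline{\rho_1\rho_2}=\overline{\rho_1}\,\overline{\rho_2}$ and $\overline{\mathrm{id}}=\mathrm{id}$. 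Inner automorphisms induce the identity (since $uau^{-1}-a=[u,au^{-1}]$), and by the Skolem--Noether theorem every $F$-automorphism of $A$ fixing $Z$ pointwise is inner; hence $\overline\rho$ depends only on whether $\rho$ is an automorphism or an antiautomorphism and on $\rho|_Z\in\mathrm{Aut}(Z/F)$. As $\mathrm{Aut}(Z/F)$ is finite, the set $\mathcal R=\{\overline\rho:\rho\text{ an automorphism or antiautomorphism of }A\}$ is finite.

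The core step is to show that a single element of $\mathcal R$ governs $T$. Applying $q$ to $T(x)^3=T_x(x^3)$ gives $q(T(x)^3)=\overline{T_x}\bigl(q(x^3)\bigr)$ with $\overline{T_x}\in\mathcal R$. For each $\overline\rho\in\mathcal R$ I would set $f_{\overline\rho}(x)=q(T(x)^3)-\overline\rho\bigl(q(x^3)\bigr)$; this is a homogeneous cubic map between $F$-vector spaces, hence, as char$(F)\ne2,3$, the diagonal restriction of a symmetric $F$-trilinear map $A^3\to A/[A,A]$. The displayed identity says that for each $x$ at least one $f_{\overline\rho}(x)$ vanishes, so Lemma \ref{lv} (this is where infiniteness of $F$ is used) yields a fixed $\overline{\rho_0}\in\mathcal R$ with $f_{\overline{\rho_0}}\equiv0$, i.e.\ $q(T(x)^3)=\overline{\rho_0}\bigl(q(x^3)\bigr)$ for all $x$.

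Finally I would untwist. Choosing an automorphism or antiautomorphism $\rho$ of $A$ with $\overline\rho=\overline{\rho_0}$ and putting $T'=\rho^{-1}\circ T$, each $\rho^{-1}\circ T_x$ is an automorphism or antiautomorphism, so $T'$ is again a local Jordan automorphism with $T'(1)=1$; and as $\rho^{-1}$ preserves cubes, $q(T'(x)^3)=\overline{\rho^{-1}}\bigl(q(T(x)^3)\bigr)=\overline{\rho_0}^{-1}\overline{\rho_0}\bigl(q(x^3)\bigr)=q(x^3)$, so $T'(x)^3-x^3\in[A,A]$ for all $x$. Theorem \ref{a} then gives $T'=\alpha J'$ with $\alpha\in Z$, $\alpha^3=1$ and $J'$ a Jordan automorphism; evaluating at $1$ forces $\alpha=1$, so $T'=J'$ is a Jordan automorphism, hence an automorphism or antiautomorphism. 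Consequently $T=\rho\circ T'$ is a composite of two maps each of which is an automorphism or an antiautomorphism, hence is itself one, in particular a Jordan automorphism. The main obstacle is exactly the center: unlike in Corollary \ref{ca}, the hypothesis of Theorem \ref{a} may genuinely fail for a local Jordan automorphism (e.g.\ complex conjugation on $\mathbb{C}$ over $\mathbb{R}$), and the substantive point is to use Lemma \ref{lv} to show the twist $\overline{T_x}$, although possibly nontrivial, does not depend on $x$ (up to the finite ambiguity $\mathcal R$); everything else --- the functoriality of $\rho\mapsto\overline\rho$, finiteness of $\mathcal R$, and the finite-field case --- is routine or handled by methods already in the paper.
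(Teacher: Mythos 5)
Your treatment of the infinite-field case is correct, and it is essentially the paper's own argument in different packaging. The paper partitions $A$ according to the restrictions $T_x|_Z\in\mathrm{Aut}(Z/F)$, fixes one Jordan automorphism $T_i$ per occurring restriction, and uses Lemma \ref{l2} to see that $(T_i^{-1}T)(x)^3-x^3\in[A,A]$ on each piece; you instead track the induced maps on $A/[A,A]$, which by Skolem--Noether together with the triviality of inner automorphisms on $A/[A,A]$ form the same finite family of possibilities (indeed, by Lemma \ref{l2} a $Z$-linear antiautomorphism also acts trivially on $A/[A,A]$, so your set $\mathcal R$ is really indexed by $\mathrm{Aut}(Z/F)$ alone; this costs nothing). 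The decisive steps --- Lemma \ref{lv} to extract a single uniform twist, untwisting by a global (anti)automorphism, and Theorem \ref{a} with $\alpha=1$ forced by $T'(1)=1$ --- coincide with the paper's. Your observation that the hypothesis of Theorem \ref{a} can genuinely fail for a local Jordan automorphism (complex conjugation on $\C$ over $\R$) correctly identifies why the twisting step is unavoidable.

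The finite-field case, however, is a genuine gap and not a routine appendix. Lemma \ref{lv} is false over finite fields, so the entire mechanism above is unavailable there, and the paper spends roughly half of its proof on this case: it identifies $A$ with $M_s(Z)$ for a finite field $Z$ (not $M_n(F)$ as you write --- the center need not be $F$), uses a normal basis of $Z/F$ to show that $T|_Z$ is a single power of the Frobenius automorphism, invokes the Bre\v sar--\v Semrl theorem on local Jordan homomorphisms of $M_s(F)$ to correct $T$ on the prime-field matrix subalgebra $A_0$, and then runs a delicate computation with rank-one idempotents $ze$, $z\in Z$, to show the corrected map is the identity on all of $M_s(Z)$. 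Nothing in your proposal substitutes for this, so as written your proof establishes the theorem only for infinite $F$.
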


\begin{proof}
We consider separately  two cases.
\smallskip 

{\bf Case 1:  $F$ is infinite}. 
The center $Z$ of $A$ is a finite extension of $F$, so there are only finitely many $F$-linear automorphisms of $Z$. Denote them by  $\sigma_1,\dots,\sigma_n$. The restriction of every Jordan automorphism 
of $A$ to $Z$ is therefore one of the $\sigma_i$'s. 

For each $x\in A$, there is a Jordan automorphism $T_x$ of $A$ such that $T(x)=T_x(x)$. Therefore, $A$ is the union of its subsets 
$$A_i=\{x\in A\,|\,  \left.T_x\right|_Z =\sigma_i\},\,\,\,i=1,\dots,n.$$
For every $i$ such that $A_i\ne \emptyset$ choose an $x_i\in A_i$ and set $T_i = T_{x_i}$. 

Take  $x\in A_i$. Then $T_i^{-1} T_x$ fixes elements from $Z$ and is  therefore a 
$Z$-linear Jordan automorphism. Hence,
$$(T_i^{-1} T)(x) ^3 - x^3  = (T_i^{-1} T_x)(x) ^3 - x^3 =  (T_i^{-1} T_x)(x ^3) - x^3\in [A,A]$$
by Lemma \ref{l2}.
This shows that  $A_i$ is a subset of the set
$$B_i=\{x\in A\,|\, (T_i^{-1} T )(x) ^3 - x^3 \in [A,A]\}.$$
Therefore, $A=\bigcup_{i=1} ^n B_i$. 

Define $f_i:A ^3\to A/[A,A]$ by $$f_i(x,y,z) = (T_i^{-1} T)(x) (T_i^{-1} T)(y) (T_i^{-1} T)(z) - xyz + [A,A].$$
Note that $x\in B_i$ if and only if $f_i(x,x,x)=0$. The conditions of Lemma \ref{lv} are therefore satisfied, and so there exists an   $i\in \{1,\dots,n\}$ such that 
$f_i(x,x,x)=0$ for every $x\in A$. That is, $(T_i^{-1} T )(x) ^3 - x^3 \in [A,A]$ for every $x\in A$. Theorem \ref{a} shows that $T_i^{-1} T$ is a Jordan  automorphism ($\alpha=1$ since $(T_i^{-1} T)(1)=1$). But then the same holds for $T=T_i(T_i^{-1} T)$. This completes the proof for this case.

\smallskip

{\bf Case 2: $F$ is finite}. Without loss of generality we may assume that $F$ is equal to its prime subfield $\mathbb F_p$. The center $Z$ of $A$ is  then   the finite field $\mathbb F_{p^{n}}$, and, by Wedderburn's theorems, $A$ can be identified with the matrix algebra $M_s(Z)$ for some $s\ge 1$.

Let $\sigma$ denote the  Frobenius automorphism of $Z$.
It is well known that the only automorphisms of $Z$ are $\sigma^{i}$, $i=0,1,\dots,n-1$. Also, it is well known that  $Z$ contains a normal basis, i.e.,  there exists an $a\in Z$ such that
the elements $\sigma^i(a)=a^{p^i}$, $i=0,1,\dots,n-1$, form a basis of $Z$ over $F$. 

The restriction of every Jordan automorphism of $A$ to $Z$ is an automorphism of $Z$. Therefore, for
each $z\in Z$ there exists an $i$ such that $T(z)=\sigma^i(z)$. Let $k$ be such that $T(a)= \sigma^k(a)$. Take $j\ge 1$. Then there exist $l,m$ such that  $$T(\sigma^j(a))= \sigma^l(a)$$ and 
 $$T(a-\sigma^j(a))= \sigma^m(a-\sigma^j(a)) =\sigma^m(a) - \sigma^{j+m}(a).$$
 On the other hand, 
 $$T(a-\sigma^j(a))=  T(a)  - T(\sigma^j(a)) = \sigma^k(a) -  \sigma^l(a).$$
Hence,
$$\sigma^m(a) - \sigma^{j+m}(a) =  \sigma^k(a) -  \sigma^l(a).$$
Since $\sigma^i(a)$, $i=0,1,\dots,n-1$, are linearly independent and  {\rm char}$(F)\ne 2$, it follows that $m=k$ and $j+m\equiv l$ (mod\,$n$).  This means that   $$T(\sigma^j(a))= \sigma^k(\sigma^j(a))\quad\mbox{for every $j\ge 1$.}$$ Consequently,
 $\left.T\right|_Z = \sigma^k$. We can extend $\sigma^k$ to an automorphism $S$ of $A$ in the obvious way, that is, $S((z_{ij}))=(\sigma^k(z_{ij}))$ for every matrix $(z_{ij})\in A$. Note that $S ^{-1}T$ is a local Jordan automorphism of $A$ that acts as the identity on $Z$. Therefore, there is no loss of generality in assuming that 
 $T$ itself is the identity on $Z$.
 Also, we may now assume that $s\ge 2$.
 
 Let $A_0=M_s(F)$. It is obvious that  $\left.T\right|_{A_0}$, the restriction of $T$ to $A_0$, is  an $F$-linear local Jordan homomorphism from $A_0$ to $A$. By \cite[Theorem 2.1]{BS},  $\left.T\right|_{A_0}$ is the sum of a homomorphism $\Phi$ and an antihomomorphism $\Psi$. Suppose $\Phi\ne 0$. From $\Phi(1)= \Phi(e_{11}) + \dots + \Phi(e_{ss})$ (where as above $e_{ii}$ are matrix units)  we see that
 $\Phi(1)$ is an idempotent that can be written as a sum of $s$ mutually orthogonal 
 nonzero idempotents in $A$.  This  implies that $\Phi(1)=1$. Similarly, $\Psi\ne 0$ yields $\Psi(1)=1$. However,
 since $1=T(1)=\Phi(1)+\Psi(1)$, one of $\Phi$ and $\Psi$ is $0$. That is, $\left.T\right|_{A_0}$ is either a homomorphism or an antihomomorphism. Hence, $R:A\to A$ given by $$R\big(\sum z_{ij}e_{ij}\big)=\sum z_{ij}T(e_{ij})\quad\mbox{for all $z_{ij}\in Z$}$$
 is a ($Z$-linear) automorphism or antiautomorphism of $A$. Note that $R^{-1}T$ is a local Jordan automorphism of $A$
 that satisfies $(R^{-1}T)(a_0)= a_0$ for every  $a_0\in A_0$. Therefore, without loss of generality we may assume that $T$  itself is the identity on $A_0$.
 
 The proof will be completed by showing that $T$ is the identity on $A$. 
 
 We start the proof with a general remark. Observe that a nonzero matrix $a\in A$ has rank one if and only if $aAa \subseteq Za$. This shows that every Jordan automorphism of $A$ maps the set of rank one matrices onto itself. The same is then true for every local Jordan automorphism. Thus, $a$ has rank one if and only if $T(a)$ has rank one.

Take and fix an arbitrary rank-one idempotent $e$ belonging to $A_0$. For any $z\in Z$, we have $$T(ze)=T_{ze}(ze)=
 T_{ze}(z)T_{ze}(e).$$
  Note that 
 $T_{ze}(z)\in Z$ and $T_{ze}(e)$ is a rank-one idempotent.
 Thus, for every $z\in Z$ there exist a $z'\in Z$ and a rank-one idempotent $e_z\in A$ such that $T(ze)= z' e_z$. Similarly,
 $T(z(1-e)) = z'' f_z$ where $z''\in Z$ and $f_z$ is an idempotent (of rank $s-1$, but we will not need this).
 Hence,
$$z = T(z)  = T(ze) + T(z(1-e))   = z' e_z + z'' f_z.$$
This implies that
 $z''(e_zf_z - f_z e_z)=0$. Since  $z''\ne 0 $  whenever $z\ne 0$  it follows that $e_z$ and $f_z$ commute for every $z\in Z$.
Further, squaring $ z=z' e_z + z'' f_z$  we obtain
$$z^2 = z'^2 e_z + z''^2 f_z + 2z'z''  f_ze_z= z'(z-z'' f_z) + f_z(z''^2 + 2z'z'' e_z),$$
which gives
$$(z-z')z = f_z(z''^2 + 2z'z'' e_z - z'z'').$$
Since $f_z$ is a nontrivial idempotent  it follows that $z=z'$ or $z=0$. We have thereby proved that for every $z\in Z$, there is a rank-one idempotent $e_z\in A$  such that $T(ze) = ze_z$. We may assume that $e_0=e$.

  Since $e\in A_0$, $T(e)=e$.
For every $z\in Z$ we thus have
$$(z+1)e_{z+1} = T((z+1)e) = T(ze) + T(e) = ze_z + e.$$
 By squaring we obtain
 $$(z+1)^2 e_{z+1}= z^2 e_z + z(e_z e + ee_z) + e.$$
 On the other hand,
  $$(z+1)^2 e_{z+1} = (z+1) (ze_z + e).$$
  Comparing the last two identities we see that $z( e_{z} - e)^2=0$. This shows that  $a_z= e_{z} - e$ has square $0$ for every $z\in Z$. Since $e_z = e + a_z$ is an idempotent it follows that   $a_z=ea_z +a_ze$. Consequently,
  $ea_z e =0$ and hence
  $$a_z = (1-e)a_ze + ea_z(1-e).$$
  Suppose $a_z\ne 0$. Then at least one of $ (1-e)a_ze $ and  $ea_z(1-e)$, let us say  the latter one, is nonzero.    
  Take any $u\in  (1-e)A_0 e$ such that $ (1-e)za_ze \ne u$. Then $ze-u\in Ae$ has rank one, but
  $$T(ze -u) = z(e+a_z) - u=  ze +\big( (1-e)za_ze - u\big)+e  za_z(1-e) $$
  has rank at least $2$ since it is a sum of an element from $Ze$, a    nonzero element from $(1-e)Ae$, and a nonzero element from $eA(1-e)$. 
  This contradiction shows that $a_z=0$.
 Thus, $T(ze)= ze$ for every $z\in Z$ and every rank one idempotent $e\in A_0$.
 
 Since $e_{ii}$ and $e_{ii} + e_{ij}$, where $i\ne j$, are rank-one idempotents belonging to $A_0$, it follows that 
 $T(ze_{ii})= ze_{ii}$ and
 $T(ze_{ij})= ze_{ij}$ for all $z\in Z$ and all $1\le i,j\le s$. But then $T(a)=a$ for every $a\in A$.
 \end{proof}




\end{document}